\documentclass[11pt]{article} %
\usepackage{fullpage}
\usepackage{amsthm}
\usepackage{amsmath, amssymb, natbib, graphicx, url}
\usepackage{color}
\usepackage{todonotes}
\usepackage{dsfont}
\usepackage{caption}
\usepackage{subcaption}
\usepackage{comment}
\usepackage{hyperref}
\definecolor{refcol}{rgb}{0.1,0,0.6}
\hypersetup{colorlinks}
\hypersetup{linkcolor=refcol, citecolor=refcol}
\usepackage{wrapfig}
\usepackage{pifont}
\usepackage{algorithm}
\usepackage{algpseudocode}
\usepackage[nameinlink]{cleveref}

\newcommand{\cA}{\mathcal{A}}

\newcommand{\cF}{\mathcal{F}}

\newcommand{\cJ}{\mathcal{J}}

\newcommand{\cO}{\mathcal{O}}

\newcommand{\N}{\mathbb{N}}

\newcommand{\R}{\mathbb{R}}

\usepackage{enumitem}
\usepackage[normalem]{ulem}

\newcommand{\dd}{{\rm d}}

\usepackage{mathtools}

\usepackage{authblk}

\newtheorem{theorem}{Theorem}[section]

\newtheorem{definition}[theorem]{Definition}

\newtheorem{lemma}[theorem]{Lemma}

\newtheorem{proposition}[theorem]{Proposition}
\newtheorem{remark}[theorem]{Remark}

\newtheorem{assumption}[theorem]{Assumption}

\title{Ensemble Kalman inversion with non-smooth regularization}
\author[]{Simon Weissmann}
\affil[]{\normalsize
  Universit\"at Mannheim, Institute of Mathematics\\

  68138 Mannheim, Germany\\
  
\texttt{simon.weissmann@uni-mannheim.de}
}
\date{}

\begin{document}

\maketitle

\begin{abstract}
This paper investigates ensemble Kalman inversion (EKI) for variational inverse problems with convex, potentially non-smooth regularization. 
While deterministic EKI and its Tikhonov-regularized variants have primarily been analyzed for smooth objectives, a corresponding framework accommodating subgradient dynamics has not yet been established. 
To address this gap, we introduce a subgradient-based formulation of EKI (SEKI) that incorporates non-smooth regularizers through a covariance-preconditioned differential inclusion for the ensemble mean.
In the linear forward-model setting, well-posedness of the resulting continuous-time particle system is established under minimal assumptions on the regularization functional using maximal monotone operator theory and Yosida approximations. 
Motivated by the continuous-time dynamics, we propose an explicit discrete-time scheme that preserves the derivative-free structure of EKI and analyze its convergence as an optimization method in the strongly convex case.
Numerical experiments in computed tomography with total variation regularization and sparse recovery with $\ell_1$ penalties illustrate that non-smooth regularization can be incorporated into ensemble Kalman inversion in a stable and principled manner.
\end{abstract}

\section{Introduction}
Inverse problems arise in a wide range of applications in science and engineering, including medical imaging, geophysics, and parameter estimation in partial differential equations. The goal is to recover an unknown quantity of interest from indirect and noisy observations. 
Due to ill-posedness, inverse problems typically require regularization in order to ensure stability with respect to perturbations in the data. 
A common approach is to formulate inverse problems as variational optimization problems of the form
\[
\min_{x \in \mathcal X} \; \mathcal L(G(x),y) +R(x),
\]
where $G:\mathcal X\to \R^{K}$ denotes the forward operator mapping from the parameter space $\mathcal X$ to the observation space $\R^K$, $\mathcal L$ is a data misfit functional, $y\in\R^K$ denotes the observed data, and $R$ is a regularization term encoding prior information. 
While classical Tikhonov regularization leads to smooth optimization problems, many practically relevant regularizers, such as total variation or sparsity-promoting $\ell^1$ penalties, are inherently non-smooth. 
This motivates optimization methods that can accommodate non-smooth objectives.

An alternative perspective is provided by the Bayesian formulation of inverse problems, in which the unknown parameter is modeled as a random variable with a prescribed prior distribution. 
Conditioning on the observed data yields the posterior distribution, which typically cannot be computed analytically and must be approximated numerically. 
Ensemble-based methods have emerged as an attractive class of algorithms in this context.

The ensemble Kalman filter (EnKF), originally introduced for data assimilation, has been adapted to inverse problems under the name ensemble Kalman inversion (EKI). EKI propagates an ensemble of particles through a sequence of updates based on empirical means and covariances, avoiding the explicit computation of derivatives of the forward map. 
This derivative-free structure has made EKI particularly attractive for large-scale and nonlinear inverse problems. 
Although EKI is often motivated from a Bayesian viewpoint, it is now well understood that it does not, in general, sample from the posterior distribution in nonlinear settings \cite{ErnstEtAl2015}. 
Instead, deterministic formulations of EKI are more naturally interpreted as optimization methods once stochastic perturbations of the data are removed \cite{CST2019,SS17}. 
In the continuous-time limit, the deterministic EKI can be written as a system of coupled ordinary differential equations for the ensemble members. 
For linear forward models with quadratic regularization, these dynamics admit an exact interpretation as a preconditioned gradient flow, where the empirical ensemble covariance acts as an adaptive preconditioner \cite{SS17}. 
For nonlinear forward models, this gradient-flow structure holds only approximately. 
This observation has motivated a line of work interpreting EKI as a derivative-free gradient-like optimization method \cite{Chada2019ConvergenceAO,tong2022localization,YL21,SRT2025,Weissmann_2022}.\medskip

The existing gradient-flow analysis of EKI relies crucially on smoothness of the objective. 
However, many practically relevant regularization strategies, such as $\ell^1$ penalties or total variation, are non-smooth, and gradients are no longer well defined. In these situations the classical gradient-flow interpretation of EKI breaks down. 
This raises the following question:
\begin{center}
\textit{How can non-smooth convex regularization be incorporated into ensemble Kalman inversion\\ in a principled and well-posed manner?}
\end{center}
In this work, we address this question by extending the gradient-flow framework of EKI to the non-smooth setting. Replacing classical gradients by subgradients leads naturally to a formulation in terms of differential inclusions. This results in a subgradient-based ensemble Kalman inversion (SEKI), which can be interpreted as a covariance-preconditioned subgradient method whose preconditioner is learned adaptively from the ensemble. For this formulation we establish well-posedness of the continuous-time dynamics, analyze the resulting discrete-time algorithm, and study the role of ensemble collapse.

\subsection{Related work}

The EnKF was introduced by Evensen \cite{GE03} 
as a particle-based approximation of the filtering distribution arising in data assimilation problems. 
It was subsequently applied to Bayesian inverse problems 
\cite{chen2012ensemble,emerick2013ensemble}, 
where it is commonly referred to as EKI.

A substantial body of work studies the large-ensemble limit of ensemble Kalman methods. 
For linear Gaussian models, convergence and consistency results have been established in \cite{L2009LargeSA,doi:10.1137/140965363}. 
Extensions to nonlinear models have been investigated in \cite{doi:10.1137/140984415}, and ensemble square-root formulations have been analyzed in \cite{LS2021_c}. 
Mean-field limits and associated Fokker–Planck descriptions have been studied in \cite{DL2021_b,MHGV2018}. 
In particular, for linear forward models with Gaussian priors, EKI approximates the posterior distribution in the mean-field limit. 
Error analysis for finite ensemble sizes has been carried out for the EnKF in \cite{Tong2018,doi:10.1002/cpa.21722} and for the ensemble Kalman–Bucy filter in \cite{delmoral2018,doi:10.1137/17M1119056}. 
Viewing the ensemble as a Monte Carlo approximation has also motivated the development of multilevel variants \cite{doi:10.1137/15M100955X,HST2020}.

Another important research direction concerns continuous-time formulations of ensemble Kalman methods. 
Continuous-time limits were formally derived in \cite{SS17} and subsequently analyzed rigorously in \cite{BSW18,BSWW2021}. 
Related formulations include localized and mollified variants \cite{bergemann2010localization,bergemann2010mollified,Reich2011,LS2021,LS2021_b,lange2021derivation}. 
In deterministic settings obtained by neglecting stochastic perturbations of the data, well-posedness and convergence results have been established for linear models in \cite{SS17} and for stochastic formulations in \cite{BSWW19}. 
A spectral characterization of the deterministic EKI dynamics is given in \cite{bungert2021}. 
From an optimization viewpoint, EKI can be interpreted as a derivative-free method, and subsequent work has further explored its gradient-flow structure \cite{Chada2019ConvergenceAO,tong2022localization,Weissmann_2022}. 
Applications in machine learning demonstrate its potential as a training method \cite{GSW2020,KS18}. 
Various regularization and stabilization strategies have also been proposed, including early stopping \cite{SS17b,iglesias2020adaptive}, iterative regularization \cite{Iglesias2015,2016InvPr..32b5002I}, and Tikhonov-regularized EKI (TEKI) \cite{CST2019,weissmann2021adaptive}. 
Furthermore, EKI can be interpreted as a low-rank approximation of Tikhonov regularization \cite{parzer2021convergence}. 
Extensions to constrained optimization have been studied in \cite{CSW19,hanu2023ensemble,Herty_2020}, and stochastic optimization variants include subsampling strategies \cite{Hanu_2023} and noisy dynamics \cite{weissmann2024ensemble}.

\subsection{Contributions}
The present work focuses on incorporating convex, possibly non-smooth regularization into deterministic EKI. While the gradient-flow structure of EKI and TEKI has been analyzed for smooth regularization, a corresponding theory for non-smooth subgradient dynamics has not yet been developed. The main contributions of this paper are summarized as follows.
\begin{itemize}
\item \textbf{Continuous-time formulation with non-smooth regularization.} 
We introduce a subgradient based continuous-time formulation of EKI for convex, lower semicontinuous regularization functionals. 
The resulting dynamics can be written as a system of differential inclusions for the ensemble mean coupled with ordinary differential equations for the ensemble deviations.
\item \textbf{Well-posedness of the particle dynamics.} In the linear forward-model setting, we establish well-posedness of the resulting particle system under minimal assumptions on the regularization functional. 
Existence and uniqueness of solutions are proved using Yosida approximations of maximal monotone operators. 
\item \textbf{Discrete-time convergence analysis.} 
Motivated by the continuous-time formulation, we propose an explicit discrete-time scheme that preserves the derivative-free structure of EKI. 
We prove convergence of the ensemble mean to the minimizer in the linear setting under strong convexity assumptions. 
The continuous-time dynamics serve primarily as a conceptual framework, while the convergence analysis is carried out directly for the implementable discrete-time algorithm.
\item \textbf{Numerical experiments and covariance freezing.} 
We evaluate the proposed framework on computed tomography and sparse recovery tasks involving total variation and $\ell_1$ regularization. To improve efficiency, we introduce a hybrid \emph{covariance freezing} strategy (Algorithm~\ref{alg:frozen_seki}), which fixes the ensemble covariance after a burn-in phase. This approach allows the SEKI dynamics to be interpreted as a mechanism for learning a problem-adapted preconditioner, reducing the per-iteration cost from $J$ forward evaluations to a single evaluation at the mean. In particular, this strategy reconciles the theoretical requirement $J-1\ge d$ with practical constraints, rendering the analysis-driven regime computationally tractable for high-dimensional problems.
\end{itemize}

\paragraph{Outline.}
The remainder of the paper is organized as follows.
\Cref{sec:mathsetup} introduces the mathematical setup for inverse problems and ensemble Kalman inversion.
\Cref{sec:wellposed} studies well-posedness of the continuous-time model, while \Cref{sec:discrete} analyzes its discrete-time formulation.
Numerical experiments are presented in \Cref{sec:numerics}, and concluding remarks are given in \Cref{sec:conclusion}.


\section{Mathematical setup}\label{sec:mathsetup}
In this section, we introduce the mathematical setting and notation used throughout the paper. 
The presentation follows the deterministic EKI framework, extended to allow for non-smooth regularization. Throughout this work, $\|\cdot\|$ denotes the Euclidean norm on $\R^d$. 
For a symmetric positive definite matrix $C\in\R^{d\times d}$, we define the weighted norm
$
\|x\|_{C} := \|C^{-1/2}x\| = \sqrt{x^\top C^{-1}x}\,.
$
We write $C([0,T];\R^d)$ for the space of continuous functions from $[0,T]$ to $\R^d$, and $L^2([0,T];\R^d)$ for the space of square-integrable measurable functions on $[0,T]$ with values in $\R^d$. 
For trajectories $x(\cdot)\in C([0,T];\R^{d\cdot J})$, the norm $\|\cdot\|_\infty$ denotes the supremum norm on $[0,T]$. 
For a symmetric positive semidefinite matrix $B$, we denote by $\lambda_{\min}(B)$ and $\lambda_{\max}(B)$ its smallest and largest eigenvalues, respectively. 
The identity matrix in $\R^{d\times d}$ is denoted by $I$.

\subsection{Inverse problem}
Consider the problem of recovering an unknown parameter $u \in \mathcal X$ from noisy observations of the form
\begin{equation}\label{eq:IP}
y = G(u) + \eta,
\end{equation}
where $\mathcal X$ is a real Hilbert space, $G:\mathcal X \to \mathbb R^{K}$ is a possibly nonlinear forward map, and $\eta \sim \mathcal N(0,\Gamma)$ denotes additive Gaussian noise with symmetric positive definite covariance matrix $\Gamma \in \mathbb R^{K \times K}$. In this work, we restrict ourselves to the finite-dimensional setting
\(
\mathcal X = \mathbb R^{d},
\)
and all norms are understood as Euclidean norms unless stated otherwise. Given observations $y \in \mathbb R^{K}$, define the variational formulation of the inverse problem
\begin{equation}\label{eq:variational_problem}
\min_{x \in \mathcal X} \; \Phi(x) + R(x),
\end{equation}
where the data misfit functional $\Phi:\R^d\to\R_+$ is defined as
\begin{equation}\label{eq:datamisfit}
\Phi(x) := \frac12 \|G(x)-y\|_\Gamma^2,
\end{equation}
and $R:\R^d \to (-\infty,+\infty]$ denotes a regularization functional. 
The composite objective functional is denoted as
\begin{equation}\label{eq:regularized_objective}
\Phi_R(x) := \Phi(x) + R(x).
\end{equation}

Throughout this work we assume that the regularization functional $R$ is convex and possibly non-smooth. 
More precisely, we impose the following standard assumption.

\begin{assumption}\label{ass:regularization}
The regularization functional $R:\R^d\to(-\infty,\infty]$ is proper, closed, convex, and lower semi-continuous. 
Moreover, we assume that $\inf_{x\in\R^d}R(x)>-\infty$.
\end{assumption}

Assumption~\ref{ass:regularization} is standard in convex regularization and is satisfied, for example, by total variation and $\ell^1$-type penalties, as well as by indicator functions of bounded convex constraint sets. Under this assumption, minimizers of \eqref{eq:variational_problem} exist and are characterized by first-order optimality conditions involving the subdifferential of $R$. 
Without loss of generality we further assume that $R(x)\ge0$ for all $x\in\R^d$.


\subsection{Ensemble Kalman inversion}\label{sec:EKI}
This section recalls deterministic EKI in its continuous-time formulation and its interpretation as a derivative-free optimization method. 
Based on this viewpoint, a subgradient-based extension suitable for non-smooth regularization will be introduced.

Let $J \ge 2$ denote the ensemble size. 
An ensemble consists of particles
\(
\{x^{(j)}\}_{j=1}^J \subset \mathbb R^{d},
\)
with ensemble mean $\bar x$ and empirical covariance $C$ defined by
\begin{equation}\label{eq:sample_covariance}
\bar x := \frac{1}{J}\sum_{j=1}^J x^{(j)},
\qquad
C(x) := \frac{1}{J}\sum_{j=1}^J (x^{(j)}-\bar x) (x^{(j)}-\bar x)^\top .
\end{equation}
The empirical cross-covariance is defined as
\begin{equation}\label{eq:cross_covariance}
C^{x,G}(x) :=
\frac{1}{J}\sum_{j=1}^J (x^{(j)}-\bar x)
\bigl(G(x^{(j)})-\bar G\bigr)^\top,
\qquad
\bar G := \frac{1}{J}\sum_{j=1}^J G(x^{(j)}).
\end{equation}
These empirical covariances provide derivative-free approximations of the linearized forward operator and play a central role in ensemble Kalman methods.

EKI arises from applying the EnKF to inverse problems. 
In its deterministic continuous-time formulation, EKI is described by the system of coupled ordinary differential equations
\begin{equation}\label{eq:cont_deterministicEKI}
\frac{{\mathrm d}x_t^{(j)}}{{\mathrm d}t}
=
C^{x,G}(x_t)\Gamma^{-1}\bigl(y - G(x_t^{(j)})\bigr),
\qquad j=1,\dots,J.
\end{equation}

The dynamic \eqref{eq:cont_deterministicEKI} admits a (derivative-free) optimization interpretation. 
For a linear forward map $G(x)=Ax$, the dynamic can be written as
\begin{equation}\label{eq:EKI_gradientflow}
\frac{{\mathrm d}x_t^{(j)}}{{\mathrm d}t}
=
- C(x_t)\nabla \Phi(x_t^{(j)}),
\end{equation}
corresponding to a preconditioned gradient flow whose time-dependent preconditioner is given by the empirical covariance matrix.

A fundamental structural property of EKI is the \emph{subspace property}: for fixed ensemble size $J$, all particles remain in the affine subspace spanned by the initial ensemble. 
Consequently, the method performs optimization restricted to this subspace. 
For nonlinear forward maps the representation \eqref{eq:EKI_gradientflow} holds only approximately. 
Nevertheless, EKI can still be viewed as a derivative-free approximation of a preconditioned gradient flow, where the approximation error depends on the ensemble spread \cite{Weissmann_2022}.

\subsection{Subgradient ensemble Kalman inversion}\label{sec:EKI_SEKI}
To stabilize the inversion and incorporate prior information, Tikhonov-regularized variants of EKI are commonly used \cite{CST2019}. 
The following section extends this idea to general convex, possibly non-smooth regularization functionals. 
In the smooth case, where $R$ is differentiable, the deterministic continuous-time formulation of regularized EKI reads
\begin{equation}\label{eq:cont_deterministicTEKI}
\frac{{\mathrm d}x_t^{(j)}}{{\mathrm d}t}
=
C^{x,G}(x_t)\Gamma^{-1}\bigl(y - G(x_t^{(j)})\bigr)
- C(x_t)\nabla R(x_t^{(j)}),
\qquad j=1,\dots,J.
\end{equation}
For quadratic Tikhonov regularization $R(x)=\frac\lambda2\|x\|^2$, $\lambda>0$, this reduces to a derivative-free approximation of the preconditioned gradient flow associated with the regularized objective $\Phi_R$ \cite{CST2019}.

The non-smooth case is considered next. Under \Cref{ass:regularization}, the gradient $\nabla R$ may not exist. 
Motivated by the optimality condition
\[
0 \in \nabla \Phi(x;y) + \partial R(x),
\]
gradients of $R$ in \eqref{eq:cont_deterministicTEKI} are replaced by subgradients. 
Let $R$ satisfy \Cref{ass:regularization}. 
The particle-wise subgradient ensemble Kalman inversion (SEKI) flow is defined by the coupled system of differential inclusions
\begin{equation}\label{eq:SEKI_particlewise}
\frac{{\mathrm d}x_t^{(j)}}{{\mathrm d}t}
\;\in\;
C^{x,G}(x_t)\Gamma^{-1}\bigl(y - G(x_t^{(j)})\bigr)
- C(x_t)\,\partial R(x_t^{(j)}),
\qquad j=1,\dots,J.
\end{equation}
Here, the set-valued term $C(x_t)\,\partial R(x_t^{(j)})$ is understood as
\[
C(x_t)\,\partial R(x_t^{(j)})
:= \{ C(x_t)\xi \mid \xi \in \partial R(x_t^{(j)}) \}.
\]
The dynamics \eqref{eq:SEKI_particlewise} provide a natural extension of regularized EKI to non-smooth objectives and can be interpreted as a derivative-free approximation of a preconditioned subgradient flow for minimizing $\Phi_R$. 
If $R$ is continuously differentiable, then $\partial R(x)=\{\nabla R(x)\}$ and \eqref{eq:SEKI_particlewise} reduces to the regularized EKI flow \eqref{eq:cont_deterministicTEKI}.

In \eqref{eq:SEKI_particlewise}, subgradients are evaluated for each particle separately. 
This leads to technical difficulties in the analysis of well-posedness, in particular in controlling the evolution of the sample covariance matrix. 
To address this issue, we consider the simplified coupled system of differential inclusions
\begin{equation}\label{eq:SEKI_cont}
\frac{{\mathrm d}x_t^{(j)}}{{\mathrm d}t}
\;\in\;
C^{x,G}(x_t)\Gamma^{-1}\bigl(y - G(x_t^{(j)})\bigr)
- C(x_t)\,\partial R(\bar x_t),
\qquad j=1,\dots,J\,.
\end{equation}
Here, subgradients are evaluated only at the ensemble mean $\bar x_t$. 
For both the theoretical analysis and the numerical experiments, we adopt the mean-subgradient formulation \eqref{eq:SEKI_cont}, which we refer to as the \emph{SEKI flow}.

To study the well-posedness, structural properties, and convergence behavior of the SEKI flow \eqref{eq:SEKI_cont}, we are interested in 
the ensemble deviations
\[
e^{(j)}(t) := x^{(j)}(t) - \bar x(t), \qquad j=1,\dots,J\,.
\]
The particle system \eqref{eq:SEKI_cont} then decomposes into the evolution of the ensemble mean and the ensemble deviations. 
The mean satisfies the differential inclusion
\begin{equation}\label{eq:SEKI_cont_mean}
\frac{{\mathrm d}\bar x(t)}{{\mathrm d}t}
+ C^{x,G}(t)\Gamma^{-1}(\bar G(t)-y)
\;\in\;
- C(t)\,\partial R(\bar x(t)),
\end{equation}
while the ensemble deviations evolve according to the ordinary differential equations
\begin{equation}\label{eq:spread_eq}
\frac{{\mathrm d} e^{(j)}(t)}{{\mathrm d}t}
=
- C^{x,G}(t)\Gamma^{-1}\bigl(G(x^{(j)}(t))-\bar G(t)\bigr).
\end{equation}

The empirical covariance matrix $C(t)$ satisfies the evolution equation
\begin{equation}\label{eq:covariance_eq}
\frac{{\mathrm d}C(t)}{{\mathrm d} t}
=
- C^{x,G}(t)\Gamma^{-1} C^{G,x}(t)
\end{equation}
where
\[
C^{G,x}(t)
=
\frac1J\sum_{j=1}^J
\bigl(G(x^{(j)}(t))-\bar G(t)\bigr) (x^{(j)}(t)-\bar x(t))^\top\,.
\]
From an optimization perspective, the resulting method can be interpreted as a derivative-free covariance-preconditioned subgradient method whose preconditioner is learned adaptively from the ensemble.

\subsection{Linear setting}
The analysis will focus on the linear forward model augmented with Tikhonov regularization. 
Following the standard TEKI formulation, this regularization can be incorporated through the augmented forward map $G:\R^d\to\R^{K+d}$ defined by
\[
G(x)= \begin{bmatrix}
    Ax \\
    x
\end{bmatrix}\in\R^{(K+d)\times d} , \quad \Gamma=\begin{bmatrix}
    \Gamma & \\
     & C_0
\end{bmatrix} \in\R^{(K+d)\times (K+d)},\quad y=\begin{bmatrix}
    y\\ 0
\end{bmatrix}\in\R^{K+d}
\]
where $A\in\R^{K\times d}$ and $C_0\in\R^{d\times d}$ is symmetric positive definite. 
In this setting, the mean–spread system simplifies considerably.

In particular, solutions of \eqref{eq:SEKI_cont} can equivalently be written in the form
\begin{equation}\label{eq:SEKI_flow_linear}
\begin{split}
    & \frac{{\mathrm d}\bar x(t)}{{\mathrm d}t}
      + C(t)A^\top\Gamma^{-1}(A\bar x(t)-y) + C(t)C_0^{-1}\bar x(t)
      \;\in\;
      - C(t)\,\partial R(\bar x(t)),\\
    & \frac{{\mathrm d} e^{(j)}(t)}{{\mathrm d}t}
      =
      - C(t)\bigl(A^\top\Gamma^{-1}A + C_0^{-1}\bigr)e^{(j)}(t),
      \qquad j=1,\dots,J\,.
\end{split}
\end{equation}

In this case the data misfit functional takes the form
\[
\Phi(x) := \frac12\|Ax-y\|_\Gamma^2 + \frac12\|x\|_{C_0}^2,
\qquad x\in\R^d.
\]
Instead of explicitly introducing Tikhonov regularization, an equivalent assumption would be that $A^\top\Gamma^{-1}A$ is strictly positive definite. 
The continuous-time formulation \eqref{eq:SEKI_flow_linear} provides a convenient framework to study structural properties of the dynamics, such as well-posedness, and to highlight the underlying covariance-preconditioned (sub)gradient-flow structure. From an optimization perspective, the mean dynamic corresponds to a descent method for the objective \eqref{eq:variational_problem}. 
However, since the empirical covariance acts as a time-dependent preconditioner, the continuous-time system does not uniquely determine a practical discretization. 
For this reason, convergence as an optimization method is analyzed directly for the discrete-time scheme introduced in \Cref{sec:discrete}. The following \Cref{sec:wellposed} establishes well-posedness and structural properties of the SEKI flow \eqref{eq:SEKI_flow_linear}.

\section{Well-posedness of the SEKI flow in the linear setting}\label{sec:wellposed}
This section establishes well-posedness of the particle system $(x^{(j)}(t),\, t\in[0,T])_{j=1,\dots,J}$ governed by the dynamics \eqref{eq:SEKI_flow_linear}. 
More precisely, existence and uniqueness of solutions on fixed time intervals $[0,T]$ are shown. 
The analysis relies on the Yosida approximation of the subdifferential, viewed as a maximal monotone set-valued operator. 
The proof follows the general strategy of Theorem~2.1 in Chapter~3 of \cite{aubin1984differential}. Throughout this section, the subdifferential $\partial R$ is therefore denoted by the maximal monotone operator $\mathcal A:\R^d\rightrightarrows\R^d$, which is admissible under \Cref{ass:regularization}. 
The only step of the proof that exploits the specific subdifferential structure of $\partial R$ is the derivation of uniform bounds for the Yosida approximation in \Cref{lem:Yosidabounded}. 
All remaining arguments apply to general maximal monotone operators.\medskip

We formulate the main result of this section:
\begin{theorem}[Well-posedness]\label{thm:main_existence}
    Let $G(\cdot)= A\cdot$ for some matrix $A\in\R^{K\times d}$ and suppose that \Cref{ass:regularization} is in place. Let $x^{(j)}(0) = x_0^{(j)}\in\R^d$, $j=1,\dots,J$ be such that the initial covariance $C(0)=C(x_0)$
    is positive definite. Then for every $T>0$ the system \eqref{eq:SEKI_flow_linear} admits a unique solution $(x^{(j)}(t),\, t\in[0,T])_{j=1,\dots,J}\in C([0,T]; \R^{d\cdot J})$.
\end{theorem}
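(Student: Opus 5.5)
We treat the system \eqref{eq:SEKI_flow_linear} in two stages: first the deviation dynamics, which do not involve $\mathcal A$, and then the mean inclusion, driven by a known, continuous, uniformly positive definite preconditioner.

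\textbf{Step 1 (deviations and covariance).} Write $B:=A^\top\Gamma^{-1}A+C_0^{-1}$, which is symmetric positive definite. The deviation equation is $\dot e^{(j)}=-C(t)Be^{(j)}$, with $C(t)=\frac1J\sum_j e^{(j)}e^{(j)\top}$. This is a closed system of ODEs with polynomial, hence locally Lipschitz, right-hand side, so it has a unique local solution. The covariance then satisfies the Riccati equation $\dot C=-2CBC$ (the symmetric form of \eqref{eq:covariance_eq}). Since $C(0)$ is positive definite, the inverse $P=C^{-1}$ satisfies $\dot P=2B$, so
\[
C(t)=\bigl(C(0)^{-1}+2tB\bigr)^{-1}.
\]
This formula shows that $C(t)$ stays positive definite and bounded for all $t$, with
\[
0<c_T I\le C(t)\le C(0)\quad\text{on }[0,T].
\]
In particular the $e^{(j)}$ stay bounded, so they exist globally and are smooth. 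Hence $C(\cdot)$ is a known $C^1$ matrix function on $[0,T]$ with uniform spectral bounds.

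\textbf{Step 2 (mean inclusion via Yosida approximation).} With $C(t)$ fixed, the mean solves
\[
\dot{\bar x}\in -C(t)\bigl(\nabla\Phi(\bar x)+\mathcal A(\bar x)\bigr).
\]
The key trick is to change metric so that the operator becomes monotone. In the time-dependent inner product $\langle u,v\rangle_{C(t)}=u^\top C(t)^{-1}v$, the operator $C(t)\mathcal A$ is maximal monotone. For $\lambda>0$ we replace $\mathcal A$ by its Yosida approximation $\mathcal A_\lambda$, which is $1/\lambda$-Lipschitz, and solve the Lipschitz ODE
\[
\dot x_\lambda=-C(t)\bigl(\nabla\Phi(x_\lambda)+\mathcal A_\lambda(x_\lambda)\bigr)
\]
globally on $[0,T]$. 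For the initial datum we can assume $\bar x_0\in\overline{\operatorname{dom}R}$; in fact we need $\bar x_0\in\operatorname{dom}\partial R$, or else we must work with weaker, Bénilan-type solutions. This is precisely the point where \Cref{lem:Yosidabounded} should enter, namely to give bounds on $\mathcal A_\lambda(x_\lambda)$ that are uniform in $\lambda$. A natural route is the energy estimate
\[
\frac{\dd}{\dd t}\bigl(\Phi+R_\lambda\bigr)(x_\lambda)=-\bigl\|C^{1/2}(\nabla\Phi+\mathcal A_\lambda)(x_\lambda)\bigr\|^2,
\]
where $R_\lambda$ is the Moreau envelope. Combined with $R_\lambda\le R$ and $c_TI\le C$, it yields bounds on $\dot x_\lambda$ in $L^2$ uniformly in $\lambda$, and on $\|x_\lambda\|_\infty$.

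\textbf{Step 3 (Cauchy property, passage to the limit, uniqueness).} Following \cite{aubin1984differential}, we estimate $\frac{\dd}{\dd t}\frac12\|x_\lambda-x_\mu\|_{C(t)}^2$. The term $\dot C^{-1}=2B$ contributes a Gronwall-controllable term, $\nabla\Phi$ is Lipschitz, and the Yosida cross term is handled by the standard inequality
\[
\langle\mathcal A_\lambda x-\mathcal A_\mu y,x-y\rangle\ge-\tfrac{\lambda+\mu}{4}\bigl(\|\mathcal A_\lambda x\|^2+\|\mathcal A_\mu y\|^2\bigr).
\]
This gives $\|x_\lambda-x_\mu\|_\infty^2\lesssim(\lambda+\mu)$ times the Step 2 bound. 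Hence $x_\lambda\to\bar x$ uniformly and $\dot x_\lambda\rightharpoonup\dot{\bar x}$ weakly in $L^2$. Closedness of the graph of maximal monotone operators in the strong$\times$weak topology, together with $J_\lambda x_\lambda\to\bar x$, shows that $\bar x$ solves the inclusion for a.e. $t$.

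Uniqueness comes from the same weighted-norm computation with $\lambda=\mu=0$: two solutions satisfy
\[
\frac{\dd}{\dd t}\|x-\tilde x\|_{C(t)}^2\le L\|x-\tilde x\|_{C(t)}^2,
\]
by monotonicity of $\mathcal A$ and Lipschitz continuity of $\nabla\Phi$, so Gronwall gives $x=\tilde x$. Finally, the particles are $x^{(j)}=\bar x+e^{(j)}$, which are continuous on $[0,T]$.

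The main obstacle is the $\lambda$-uniform bound on $\|\mathcal A_\lambda(x_\lambda)\|$ in Step 2 under the time-varying preconditioner. The energy identity must absorb $\dot C$, and the initial value $\bar x_0$ is arbitrary in $\R^d$ and may lie outside $\operatorname{dom}\partial R$. This is exactly where I would rely on \Cref{lem:Yosidabounded}, using that $R_\lambda\le R$ and $\inf R>-\infty$.
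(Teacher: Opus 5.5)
Your proposal follows the paper's proof essentially step by step. The covariance is obtained explicitly from $\frac{\dd}{\dd t}C^{-1}=2B$ and does not depend on the Yosida parameter. The mean inclusion is approximated by replacing $\cA$ with $\cA_\lambda$. The Cauchy estimate is carried out in the weighted norm $\|\cdot\|_{C(t)}$ with the same cross-term inequality for $\cA_\lambda,\cA_\mu$. The limit is identified through strong--weak closedness of the graph of $\cA$, and uniqueness comes from the weighted-norm estimate as in \Cref{prop:uniqueness}.

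The step you call the main obstacle is not one. The identity $\frac{\dd}{\dd t}(\Phi+R_\lambda)(x_\lambda)=-\|C^{1/2}(\nabla\Phi+\cA_\lambda)(x_\lambda)\|^2$ is a plain Euclidean chain rule in which $\dot C$ never appears. Combine it with $\inf R\le R_\lambda\le R$, coercivity of $\Phi$, and $c_TI\preceq C(t)\preceq C(0)$. This yields $\lambda$-uniform bounds on $\|x_\lambda\|_\infty$, $\|\dot x_\lambda\|_{L^2}$ and $\|\cA_\lambda(x_\lambda)\|_{L^2}$ as soon as $R(\bar x_0)<\infty$; this is Step~2 of \Cref{lem:Yosidabounded}. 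So $\bar x_0\in\operatorname{dom}R$ suffices. The restriction to $\operatorname{dom}\partial R$, or to a weaker solution concept, is the hypothesis needed for general maximal monotone operators, not for subdifferentials.

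Reading the sup bound directly off this energy is also simpler than the paper's Step~1. That step centres a Lyapunov function at $x_\tau^\ast$ and drops an $\cO(\tau)$ term coming from $\nabla\Phi(x_\tau^\ast)\neq\nabla\Phi(x^\ast)$. Likewise, Gronwall is unnecessary in your Cauchy and uniqueness estimates: $\frac{\dd}{\dd t}C^{-1}=2B$ cancels the linear part of $\nabla\Phi$ exactly. That cancellation is what gives the contraction estimate of \Cref{prop:uniqueness}. Your Gronwall version is weaker but still sufficient on $[0,T]$.

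Your caveat about the initial datum is correct, and it exposes a hypothesis the paper uses silently. The energy bound in Step~2 of \Cref{lem:Yosidabounded} is uniform in $\tau$ only if $R(\bar x_0)<\infty$, because $R_\tau(\bar x_0)\uparrow R(\bar x_0)$ as $\tau\downarrow 0$.

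For $\bar x_0\notin\overline{\operatorname{dom}R}$ the theorem as stated is actually false. A solution needs $\bar x(t)\in\operatorname{dom}\partial R$ for a.e.\ $t$, so by continuity $\bar x_0\in\overline{\operatorname{dom}R}$. For example, take $R$ to be the indicator of a closed ball, which satisfies \Cref{ass:regularization}, and choose $\bar x_0$ outside that ball.

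Hence you cannot invoke \Cref{lem:Yosidabounded} for arbitrary $\bar x_0\in\R^d$, and your phrase ``we can assume $\bar x_0\in\overline{\operatorname{dom}R}$'' is really a necessary condition. The statement needs the additional hypothesis $\bar x_0\in\operatorname{dom}R$, which holds automatically if $R$ is finite-valued. Under that hypothesis your argument is complete.
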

The proof proceeds in several steps.

\begin{itemize}
\item First, the evolution of the empirical covariance matrix $C(\cdot)$ and its inverse $C^{-1}(\cdot)$ is characterized. 
\Cref{lem:cov} provides lower and upper bounds on the corresponding eigenvalues. 
Since the maximal monotone operator $\mathcal A$ is evaluated only at the ensemble mean $\bar x(t)$, the covariance evolution coincides with that of standard EKI.

\item Second, \Cref{prop:uniqueness} establishes uniqueness of solutions under the assumption that two solutions exist.

\item The main technical ingredient is the construction of the Yosida approximation. 
The maximal monotone operator is approximated via the resolvent operator $\mathcal J_\tau$, $\tau>0$. 
Replacing $\mathcal A$ by its Yosida approximation $\mathcal A_\tau$ yields a sequence of approximate solutions
\(
(x_{\tau}^{(j)}(t),\, t\in[0,T])_{j=1,\dots,J}.
\) \Cref{lem:Yosidabounded} provide uniform bounds on these approximate solutions.  

\item The uniform bounds on the approximate solutions imply that the sequence \(
(x_{\tau}^{(j)}(t),\, t\in[0,T])_{j=1,\dots,J}.
\) forms a Cauchy sequence in 
$C([0,T];\R^{d\cdot J})$ as stated in \Cref{lem:cauchy}. 
Its limit 
\[
(\hat x^{(j)}(t),\, t\in[0,T])_{j=1,\dots,J}
\in C([0,T];\R^{d\cdot J})
\]
is then shown to solve \eqref{eq:SEKI_flow_linear}.
\end{itemize}

\begin{remark}
    The statement of \Cref{thm:main_existence} implicitly assumes $J-1 \ge d$ to ensure that the empirical covariance matrix $C(t)$ remains invertible for all $t \ge 0$. This condition guarantees that the preconditioned dynamics are well-defined on the full space $\mathbb{R}^d$.

    In practice, ensemble Kalman methods evolve within the affine subspace spanned by the initial ensemble, whose dimension is at most $J-1$. Even when $J-1 < d$, the dynamics remain confined to this subspace due to the well-known subspace property of EKI. While the analysis could be reformulated in coordinates adapted to this ensemble subspace, where the covariance becomes invertible as an operator, a rigorous treatment of such a subspace-restricted formulation is left for future work.

    From an algorithmic perspective, in \Cref{sec:numerics} we propose \emph{covariance freezing} strategy (\Cref{alg:frozen_seki}) that provides a practical way to reconcile this theoretical requirement with computational efficiency. By assuming $J-1 \ge d$ during an initial burn-in phase (Phase I), the dynamics primarily serve to learn an informative covariance structure. Once this structure is frozen (Phase II), the method transitions to a preconditioned subgradient descent at the ensemble mean. This hybrid approach renders the theoretically motivated regime $J-1 \ge d$ computationally tractable for high-dimensional problems by reducing the per-iteration cost to a single forward evaluation. Additionally, in the regime $J-1 < d$, freezing the covariance helps to prevent further reduction in the search space through ensemble collapse.
\end{remark}

The remainder of this section is devoted to the proof of \Cref{thm:main_existence}.

\subsection{Sample covariance bounds}
The first step is to characterize the evolution of the empirical covariance matrix and to show that the ensemble collapses at rate $t^{-1}$. This includes both upper and lower bounds on the eigenvalues of the sample covariance.

\begin{lemma}[Sample covariance representation]\label{lem:cov}
Under the setting of \Cref{thm:main_existence}, let $(x^{(j)}(t),\, t\in[0,T])_{j=1,\dots,J}$ be a solution of \eqref{eq:SEKI_flow_linear} for some $T>0$. 
Then the empirical covariance matrices $(C(t),\ t\ge0)\subset\R^{d\times d}$ uniquely satisfy
\begin{equation}\label{eq:ODE_C}
\frac{{\mathrm d} C(t)}{{\mathrm d}t}
=
-2C(t)\bigl(A^\top \Gamma^{-1} A + C_0^{-1}\bigr) C(t).
\end{equation}
Moreover, for each $t\ge0$ the covariance matrix and its inverse admit the representations
\begin{equation}\label{eq:cov_solution}
C(t)
=
\Big(C^{-1}(0) + 2t (A^\top \Gamma^{-1} A + C_0^{-1})\Big)^{-1},
\qquad
C^{-1}(t)
=
C^{-1}(0) + 2t (A^\top \Gamma^{-1} A + C_0^{-1}).
\end{equation}

Both matrices are symmetric positive definite for all $t\ge0$ and depend continuously on $t$. 
Furthermore, there exist constants $\ell_1,\ell_2,u_1,u_2>0$ such that the eigenvalues
\(
\lambda_{\min}(t)=\lambda_1(t)\le\dots\le\lambda_d(t)=\lambda_{\max}(t)
\)
of $C(t)$ satisfy
\begin{equation}\label{eq:eigenvalue_bounds}
\frac{\ell_1}{\ell_2(t+1)}
\le
\lambda_{\min}(t)
\le
\lambda_{\max}(t)
\le
\frac{u_1}{u_2(t+1)}
\end{equation}
for all $t\ge0$.
\end{lemma}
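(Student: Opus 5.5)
We start from the deviation equations in \eqref{eq:SEKI_flow_linear}. Set $B:=A^\top\Gamma^{-1}A+C_0^{-1}$, which is symmetric positive definite. By definition $C(t)=\frac1J\sum_j e^{(j)}(t)e^{(j)}(t)^\top$, and each $e^{(j)}$ is absolutely continuous and satisfies $\dot e^{(j)}=-C B e^{(j)}$. The product rule then gives
\[
\dot C=\frac1J\sum_j\bigl(\dot e^{(j)}e^{(j)\top}+e^{(j)}\dot e^{(j)\top}\bigr)=-CBC-CBC=-2CBC,
\]
using the symmetry of $C$ and $B$. This is \eqref{eq:ODE_C}. The subgradient term enters only the mean equation, so it plays no role here. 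Uniqueness of $C$ follows because the right-hand side $C\mapsto -2CBC$ is locally Lipschitz, so Picard--Lindelöf gives at most one solution with initial value $C(0)$.

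Next we verify the explicit formula. Define $M(t):=C^{-1}(0)+2tB$. This matrix is symmetric positive definite for $t\ge0$, being the sum of a positive definite matrix and a positive semidefinite one, so $D(t):=M(t)^{-1}$ is well defined and smooth in $t$. Differentiating the identity $DM=I$ gives $\dot D=-D\dot M D=-2DBD$. Hence $D$ solves \eqref{eq:ODE_C} with $D(0)=C(0)$, and by uniqueness $C(t)=D(t)$ on the whole interval where the solution exists. This yields both identities in \eqref{eq:cov_solution}, positive definiteness and continuity. The only subtle point is that $C$ is a priori defined through the solution trajectory. We therefore apply uniqueness on $[0,T]$ for the given solution, which is legitimate because $D$ is globally defined and $C$ is continuous.

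For the eigenvalue bounds we work with $C^{-1}(t)=C^{-1}(0)+2tB$. By Weyl's inequalities,
\[
\lambda_{\max}(C^{-1}(t))\le\lambda_{\max}(C^{-1}(0))+2t\lambda_{\max}(B)\le \max\{\lambda_{\max}(C^{-1}(0)),2\lambda_{\max}(B)\}(1+t),
\]
and similarly
\[
\lambda_{\min}(C^{-1}(t))\ge\lambda_{\min}(C^{-1}(0))+2t\lambda_{\min}(B)\ge\min\{\lambda_{\min}(C^{-1}(0)),2\lambda_{\min}(B)\}(1+t).
\]
The lower bound is positive because $B$ is positive definite, and this is exactly where the Tikhonov part $C_0^{-1}$ is used. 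Inverting the two estimates gives \eqref{eq:eigenvalue_bounds}. One admissible choice of constants is $\ell_1=u_1=1$, $\ell_2=\max\{\lambda_{\max}(C^{-1}(0)),2\lambda_{\max}(B)\}$ and $u_2=\min\{\lambda_{\min}(C^{-1}(0)),2\lambda_{\min}(B)\}$. All steps are routine. The only real care is needed in justifying the product rule and the uniqueness argument for the trajectory-defined $C$, and I expect no substantial obstacle.
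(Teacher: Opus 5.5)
Your argument is correct. The paper gives no argument of its own: it cites Theorem~3.3 and Lemma~3.12 of \cite{CST2019}, and your derivation is a self-contained version of the standard route those results rest on (Riccati equation for $C$, linear equation for $C^{-1}$, eigenvalue bounds by Weyl). Your observation that the subgradient term enters only the mean equation is exactly what makes the paper's appeal to the Tikhonov-EKI covariance results legitimate here.
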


\begin{proof}
The result follows directly from Theorem~3.3 and Lemma~3.12 in \cite{CST2019}.
\end{proof}

\subsection{Uniqueness of solutions}
Using the characterization of the empirical covariance, uniqueness of solutions to \eqref{eq:SEKI_flow_linear} can be established.

\begin{proposition}[Uniqueness of solutions]\label{prop:uniqueness}
    Under the setting of \Cref{thm:main_existence}, let $(x^{(j)}(t),\ t\ge0)_{j=1,\dots,J}$ and $(z^{(j)}(t),\ t\ge0)_{j=1,\dots,J}$ be two solutions of \eqref{eq:SEKI_flow_linear} initialized with $x^{(j)}(0) = x_0^{(j)}\in\R^d$ and $z^{(j)}(0)=z_0^{(j)}\in\R^d$, $j=1,\dots,J$, such that \[\frac1J\sum_{j=1}^J(x_0^{(j)}-\bar x_0)(x_0^{(j)}-\bar x_0)^\top = \frac1J\sum_{j=1}^J(z_0^{(j)}-\bar z_0)(z_0^{(j)}-\bar z_0)^\top\,. \]
    Then for all $t\ge0$,
    \[ C^{x}(t) = \frac1J\sum_{j=1}^J(x^{(j)}(t)-\bar x(t))(x^{(j)}(t)-\bar x(t))^\top = \frac1J\sum_{j=1}^J(z^{(j)}(t)-\bar z(t))(z^{(j)}(t)-\bar z(t))^\top = C^{z}(t)\]
    and the ensemble means satisfy the contraction estimate    
    \[\|\bar x(t) - \bar z(t)\|_{C(t)}\le \|\bar x_0 - \bar z_0\|_{C(0)}\,,\]
    where $C(t) = C^{x}(t) = C^{z}(t)$.
\end{proposition}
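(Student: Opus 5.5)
The proof splits into two parts: first the covariances coincide, then the weighted distance of the means contracts.

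\emph{Step 1: equality of covariances.} By \Cref{lem:cov}, each solution's covariance solves the matrix ODE \eqref{eq:ODE_C}. That equation does not involve $\mathcal A$ or the means, and it has the explicit representation \eqref{eq:cov_solution}, which is determined by the initial value alone. Since $C^x(0)=C^z(0)$, we get $C^x(t)=C^z(t)=C(t)$ for all $t$. If uniqueness for the ODE is wanted directly: its right-hand side is locally Lipschitz, and both solutions stay bounded by \eqref{eq:eigenvalue_bounds}. Alternatively, the inverses both satisfy $\frac{\dd}{\dd t}C^{-1}=2(A^\top\Gamma^{-1}A+C_0^{-1})$ with the same initial value, so they agree.

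\emph{Step 2: contraction of the means.} Set $w(t)=\bar x(t)-\bar z(t)$ and $B=A^\top\Gamma^{-1}A+C_0^{-1}$. The first line of \eqref{eq:SEKI_flow_linear} gives
\[\dot{\bar x}=-C(B\bar x-A^\top\Gamma^{-1}y+\xi_x),\qquad \dot{\bar z}=-C(B\bar z-A^\top\Gamma^{-1}y+\xi_z)\]
for a.e.\ $t$, with measurable selections $\xi_x(t)\in\partial R(\bar x(t))$ and $\xi_z(t)\in\partial R(\bar z(t))$. Hence $\dot w=-C(Bw+\xi_x-\xi_z)$. Differentiate $V(t)=\|w\|_{C(t)}^2=w^\top C^{-1}w$, which is absolutely continuous:
\[\dot V = 2w^\top C^{-1}\dot w + w^\top \tfrac{\dd}{\dd t}C^{-1}\, w = -2w^\top Bw-2\langle w,\xi_x-\xi_z\rangle + 2w^\top Bw = -2\langle \bar x-\bar z,\xi_x-\xi_z\rangle\le 0.\]
The two $w^\top Bw$ terms cancel because $\frac{\dd}{\dd t}C^{-1}=2B$, and the last inequality is monotonicity of $\partial R$. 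Integrating gives $V(t)\le V(0)$, which is the claimed estimate.

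\emph{Main obstacle.} The delicate points are regularity. Solutions of the inclusion should be understood as absolutely continuous, satisfying \eqref{eq:SEKI_flow_linear} a.e., so that $V$ is absolutely continuous and the chain rule holds a.e. The selections $\xi_x,\xi_z$ need only exist pointwise a.e.; they can be written as $\xi_x=-C^{-1}\dot{\bar x}-B\bar x+A^\top\Gamma^{-1}y$, which is measurable. The cancellation of the $B$-terms is the key structural observation: it turns the time-dependent preconditioning into a pure monotonicity term, so I would check it carefully. It uses only that the covariance evolution is independent of $\mathcal A$, which holds because subgradients are evaluated only at the mean.
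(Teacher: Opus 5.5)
Your proposal is correct and follows the paper's proof essentially step for step. The paper also obtains $C^x=C^z$ from uniqueness of the closed covariance ODE \eqref{eq:ODE_C}. It then differentiates $\|\bar x-\bar z\|_{C(t)}^2$, uses $\frac{\dd}{\dd t}C^{-1}=2(A^\top\Gamma^{-1}A+C_0^{-1})$ to cancel the quadratic terms, and concludes by monotonicity of $\partial R$. Your explicit handling of the absolute continuity of $V$ and of the a.e.\ selections $\xi_x,\xi_z$ makes rigorous what the paper leaves implicit.
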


\begin{proof}
    
    The equality $C^x(t)=C^z(t)$ for all $t\ge0$ follows from the uniqueness of solutions to the covariance equation \eqref{eq:ODE_C}. Moreover, by \Cref{lem:cov}, the covariance matrix $C(t)$ is invertible for all $t\ge0$. Since $\mathcal A$ is maximal monotone, it follows
    \begin{align*}
        0&\ge \Bigg\langle \bar x(t) - \bar z(t), C^{-1}(t) \Big(\frac{{\mathrm d}\bar x(t)}{{\mathrm d}t} + C(t)(A^\top \Gamma^{-1}(A\bar x(t)-y) + C_0^{-1}\bar x(t) \Big) \\ & \qquad\qquad\qquad\ -C^{-1}(t) \Big(\frac{{\mathrm d}\bar z(t)}{{\mathrm d}t} + C(t)(A^\top \Gamma^{-1}(A\bar z(t)-y) + C_0^{-1}\bar z(t) \Big)\Bigg\rangle  \\
        &=\Big\langle C^{-1}(t)(\bar x(t) - \bar z(t)),\frac{{\mathrm d}\bar x(t)}{{\mathrm d}t}-\frac{{\mathrm d}\bar z(t)}{{\mathrm d}t} \Big\rangle + \Big\langle \bar x(t)-\bar z(t) , (A^\top \Gamma^{-1}A+ C_0^{-1})(\bar x(t)-\bar z(t))\Big\rangle\,.
    \end{align*}
    By chain rule it holds
    \begin{align*}
        \frac{{\mathrm d}\|\bar x(t)-\bar z(t)\|_{C(t)}^2}{{\mathrm d}t} &= \frac{{\mathrm d}(\bar x(t)-\bar z(t))^\top C^{-1}(t) (\bar x(t)-\bar z(t))}{{\mathrm d}t}\\ &= 2\Big\langle \bar x(t)-\bar z(t), C^{-1}(t)\big(\frac{{\mathrm d}\bar x(t)}{{\mathrm d}t}-\frac{{\mathrm d}\bar z(t)}{{\mathrm d}t} \big)\Big\rangle\\
        &\quad-\Big\langle \bar x(t)-\bar z(t) , C^{-1}(t)\frac{{\mathrm d}C(t)}{{\mathrm d}t} C^{-1}(t)\big(\bar x(t)-\bar z(t)\big)\Big\rangle\\
        & = 2\Big\langle C^{-1}(t)\big(\bar x(t)-\bar z(t)\big), \frac{{\mathrm d}\bar x(t)}{{\mathrm d}t}-\frac{{\mathrm d}\bar z(t)}{{\mathrm d}t} \Big\rangle\\
        &\quad+2\langle C^{-1}(t)\big(\bar x(t)-\bar z(t)\big), C(t)(A^\top\Gamma^{-1}A+C_0^{-1})C(t)C^{-1}(t)\big(\bar x(t)-\bar z(t)\big)\Big\rangle\\
        & = 2\Big\langle C^{-1}(t)\big(\bar x(t)-\bar z(t)\big), \frac{{\mathrm d}\bar x(t)}{{\mathrm d}t}-\frac{{\mathrm d}\bar z(t)}{{\mathrm d}t} \Big\rangle\\
        &\quad+2\langle \big(\bar x(t)-\bar z(t)\big), (A^\top\Gamma^{-1}A+C_0^{-1})\big(\bar x(t)-\bar z(t)\big)\Big\rangle \le 0\,,
    \end{align*}
    where have used that $C$ is given by the ODE \eqref{eq:ODE_C}.
\end{proof}
\begin{remark}
    The uniqueness of the full particle \eqref{eq:SEKI_flow_linear} follows from the fact that we can bound the ensemble deviations by
    \[\|x^{(j)}(t)-\bar x(t) - (z^{(j)}(t)-\bar z(t))\| \le \|x_0^{(j)}-\bar x_0 - (z_0^{(j)}-\bar z_0)\|\,,  \]
    implying that
    \[\|x^{(j)}(t)-z^{(j)}(t) \| \le \|x_0^{(j)}-\bar x_0 - (z_0^{(j)}-\bar z_0)\| + \|\bar x_0-\bar z_0\|\,.\]
\end{remark}

\subsection{Yosida approximation}
The next step in the proof introduces the Yosida approximation $\mathcal A_\tau$ of the maximal monotone operator $\mathcal A:\R^d\rightrightarrows\R^d$, defined by
\[
\mathcal A_{\tau} := \frac{1}{\tau} (\mathrm{Id}-\mathcal J_\tau),\qquad \tau >0,
\]
where $\mathcal J_\tau:=({\mathrm{Id}}+\tau \mathcal A)^{-1}$ denotes the resolvent operator. 
In the present setting, where $\mathcal A=\partial R$, the resolvent is given by the proximal operator
\[
\mathcal J_\tau(x) := \mathrm{prox}_{\tau R}(x)
:= \arg\min_{z\in\R^d}\left\{R(z)+\frac1{2\tau}\|z-x\|^2\right\},\qquad x\in\R^d.
\]
The associated Moreau envelope is defined by
\[
R_\tau(x):=\min_{z\in\R^d}\left\{R(z)+\frac1{2\tau}\|x-z\|^2\right\},\qquad x\in\R^d,
\]
and satisfies $\nabla R_\tau = \mathcal A_\tau$. 
The following properties hold \cite[Theorem~1.2, Chapter~3]{aubin1984differential}:
\begin{itemize}
    \item $\mathcal A_\tau(x)\in \mathcal A(\mathcal J_\tau x)$ for all $x\in\R^d$ and $\tau>0$,
    \item $\mathcal A_\tau$ is $\frac1\tau$-Lipschitz continuous,
    \item $\mathcal J_\tau x\to x$ as $\tau\to0$ for all $x\in\R^d$.
\end{itemize}

Let $x_\tau = (x_\tau^{(j)}(t),\, t\in[0,T])_{j=1,\dots,J}$ denote the unique solution of the regularized system
\begin{equation}\label{eq:Yosida_approx_linear}
\begin{split}
\frac{{\mathrm d}\bar x_\tau(t)}{{\mathrm d}t}
&+
C_\tau(t)A^\top\Gamma^{-1}(A \bar x_\tau(t)-y)
+
C_\tau(t)\mathcal A_\tau(\bar x_\tau(t))
= 0,\\
\frac{{\mathrm d} e_\tau^{(j)}(t)}{{\mathrm d}t}
&=
- C_\tau(t)(A^\top\Gamma^{-1} A +C_0^{-1})e_\tau^{(j)}(t),
\qquad j=1,\dots,J,
\end{split}
\end{equation}
where
\[
C_\tau(t) = \frac{1}{J}\sum_{j=1}^J e_\tau^{(j)}(t)(e_\tau^{(j)}(t))^\top,
\qquad
e_\tau^{(j)}(t) = x_\tau^{(j)}(t)-\bar x_\tau(t),
\qquad
\bar x_\tau(t)=\frac1J\sum_{j=1}^J x_\tau^{(j)}(t).
\]
As in the original system, the covariance satisfies $C_\tau(t)=C(t)$ and is therefore given by \eqref{eq:cov_solution}.

The first step is to establish uniform boundedness of $\bar x_\tau(\cdot)$ with respect to $\tau>0$, together with an $L^2$-bound for the Yosida approximation.

\begin{lemma}[Uniform boundedness]\label{lem:Yosidabounded}
    Under the setting of \Cref{thm:main_existence}, for any time horizon $T>0$, initialization $x_0 \in\R^{d\cdot J}$, $\tau_{\max}>0$ and $\tau\in(0,\tau_{\max}]$ such that
    \(C(0)= \frac1J\sum_{j=1}^J(x_0^{(j)}-\bar x_0)(x_0^{(j)}-\bar x_0)^\top\)
    is positive definite
    there exists a unique solution $(x_\tau^{(j)}(t),\, t\in[0,T])_{j=1,\dots,J}$ of \eqref{eq:Yosida_approx_linear} with $x_\tau(0) = x_0$ that is continuously differentiable. Moreover, for all $t\in[0,T]$ this solution satisfies
    \[\|\bar x_\tau(t)\|^2 \le R_1\,\quad \text{and}\quad \int_{0}^t\|\cA_\tau(\bar x_\tau(s))\|^2\,\dd s\le R_2\,, \]
    for constants $R_1\, , R_2>0$ independent of $\tau\in(0,\tau_{\max}]$ and all $t\in[0,T]$. The constant $R_1$ is also independent of $T$.
\end{lemma}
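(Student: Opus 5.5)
Existence and uniqueness for \eqref{eq:Yosida_approx_linear} come first. The covariance $C_\tau(t)=C(t)$ is given explicitly by \eqref{eq:cov_solution}. It is smooth, symmetric positive definite and bounded on $[0,T]$ by \Cref{lem:cov}. The deviation equations are therefore linear ODEs with continuous coefficients and have unique global $C^1$ solutions. The mean equation has right-hand side $-C(t)\bigl(A^\top\Gamma^{-1}(A\bar x-y)+C_0^{-1}\bar x+\cA_\tau(\bar x)\bigr)$. Since $\cA_\tau$ is $\frac1\tau$-Lipschitz and $\sup_{[0,T]}\lambda_{\max}(C(t))<\infty$, this right-hand side is globally Lipschitz in $\bar x$, uniformly in $t\in[0,T]$. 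Picard--Lindelöf then yields a unique global $C^1$ solution.

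For the uniform bounds I will use a Lyapunov functional built from the Moreau envelope, namely
\[
V_\tau(t):=\Phi(\bar x_\tau(t))+R_\tau(\bar x_\tau(t)),
\qquad
\Phi(x)=\tfrac12\|Ax-y\|_\Gamma^2+\tfrac12\|x\|_{C_0}^2 .
\]
Since $\nabla R_\tau=\cA_\tau$ and $R_\tau$ is $C^1$, the chain rule gives
\[
\frac{\dd}{\dd t}V_\tau(t)=-\bigl\langle g_\tau,C(t)\,g_\tau\bigr\rangle\le -\lambda_{\min}(t)\|g_\tau\|^2\le 0,
\qquad
g_\tau:=\nabla\Phi(\bar x_\tau)+\cA_\tau(\bar x_\tau).
\]
Two consequences follow. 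First, $V_\tau(t)\le V_\tau(0)\le \Phi(\bar x_0)+R_\tau(\bar x_0)$. Second, using $R\ge0$, we have $0\le R_\tau\le R$. The bound on $V_\tau(0)$ is harmless if $\bar x_0\in\operatorname{dom}R$, but otherwise $R_\tau(\bar x_0)$ can blow up as $\tau\to0$. I will handle this by using $R_\tau(\bar x_0)\le R(z_0)+\frac{1}{2\tau}\|\bar x_0-z_0\|^2$, or by assuming $\bar x_0\in\operatorname{dom}R$ (as in \cite{aubin1984differential}, where the initial datum lies in the domain). I regard this as a subtle point to flag. Given the energy bound, $\frac12\|\bar x_\tau\|^2_{C_0}\le\Phi(\bar x_\tau)\le V_\tau(0)$, which gives $R_1=2\lambda_{\max}(C_0)V(0)$. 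This constant is independent of $T$ and of $\tau$.

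The $L^2$ bound on $\cA_\tau$ is the main obstacle. Integrating the dissipation gives only
\[
\int_0^t\lambda_{\min}(s)\|g_\tau(s)\|^2\,\dd s\le V_\tau(0),
\]
and the lower bound $\lambda_{\min}(s)\ge \ell_1/(\ell_2(s+1))\ge \ell_1/(\ell_2(T+1))$ from \eqref{eq:eigenvalue_bounds} turns this into $\int_0^t\|g_\tau\|^2\le (T+1)\ell_2V_\tau(0)/\ell_1$. I then write $\cA_\tau(\bar x_\tau)=g_\tau-\nabla\Phi(\bar x_\tau)$. The term $\|\nabla\Phi(\bar x_\tau)\|$ is bounded uniformly by the $R_1$ bound, since $\nabla\Phi$ is affine. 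This yields
\[
\int_0^t\|\cA_\tau(\bar x_\tau)\|^2\,\dd s\le 2\int_0^t\|g_\tau\|^2\,\dd s+2T\sup_{\|x\|^2\le R_1}\|\nabla\Phi(x)\|^2=:R_2,
\]
which depends on $T$ but not on $\tau$. This is the step that exploits the subdifferential structure through $\nabla R_\tau=\cA_\tau$, as announced in the text. If a $\tau$-independent bound on $V_\tau(0)$ fails for $\bar x_0\notin\operatorname{dom}R$, I would instead fall back to the monotonicity argument of Aubin--Cellina. That argument tests the mean equation against $C^{-1}(t)(\bar x_\tau-x^*)$, with $x^*$ a fixed point of $\operatorname{dom}\cA$ and $\cA^0(x^*)$ its minimal-norm selection. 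Proceeding as in \Cref{prop:uniqueness}, it bounds $\|\bar x_\tau-x^*\|_{C(t)}$; the resulting bound is at least $T$-uniform.
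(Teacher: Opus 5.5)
Your argument is correct, but it bounds $\|\bar x_\tau\|^2$ by a different route from the paper. The $L^2$ bound on $\cA_\tau(\bar x_\tau)$ matches the paper's Step~2: dissipation of $\Phi+R_\tau$, then $\lambda_{\min}(s)\ge \ell_1/(\ell_2(T+1))$, then splitting off the $\nabla\Phi$ term.

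For $R_1$, the paper does not use the energy. It mimics \Cref{prop:uniqueness}: it picks $x^\ast_\tau$ with $\cA_\tau(x^\ast_\tau)=u^\ast=-\nabla\Phi(x^\ast)$, claims $\frac12\|\bar x_\tau-x^\ast_\tau\|_{C(t)}^2$ is nonincreasing, and returns to the Euclidean norm via $\lambda_{\max}(t)\le u_1/(u_2(t+1))$. That route does not need $R(\bar x_0)<\infty$ for $R_1$. As written, however, it drops the cross term $-\tau\langle \bar x_\tau-x^\ast_\tau,(A^\top\Gamma^{-1}A+C_0^{-1})u^\ast\rangle$, because $x^\ast_\tau$ is not a rest point of the Yosida flow.

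Your fallback repairs exactly this. You take an arbitrary $x^\ast\in\operatorname{dom}\cA$, use $\|\cA_\tau(x^\ast)\|\le\|\cA^0(x^\ast)\|$, and then $\int_0^t\sqrt{\lambda_{\max}(s)}\,\dd s\lesssim\sqrt{t+1}$. This controls a non-vanishing linear term and still gives a bound uniform in $\tau$ and $T$.

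Your main energy route avoids the issue altogether. One Lyapunov functional, together with $R_\tau\ge 0$ and the coercivity $\Phi(x)\ge\frac12\|x\|_{C_0}^2$, gives both bounds. Your global-Lipschitz existence argument ($\cA_\tau$ is $\frac1\tau$-Lipschitz and $C(t)$ is bounded) is also cleaner than the paper's local existence plus a priori bound.

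On the domain issue, your first remedy does not help. The bound $R_\tau(\bar x_0)\le R(z_0)+\frac1{2\tau}\|\bar x_0-z_0\|^2$ is not uniform in $\tau$, and the fallback rescues only $R_1$. In fact no argument can do better. For $\bar x_0\notin\operatorname{dom}R$ the $R_2$ claim is false: take $d=1$, $R$ the indicator of $(-\infty,0]$ and $\bar x_0=1$. Then $R_\tau(\bar x_0)=\frac1{2\tau}$, and the energy identity (with $C(t)\preceq C(0)$ and $E_\tau\ge 0$) forces $\int_0^T\|\cA_\tau(\bar x_\tau)\|^2\,\dd s\gtrsim\tau^{-1}$.

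So $\bar x_0\in\operatorname{dom}R$ is a necessary hypothesis, not a convenience. The paper uses it silently through $E_\tau(0)\le\Phi(\bar x_0)+R(\bar x_0)$. It holds automatically for finite-valued $R$ such as the TV and $\ell_1$ penalties used in the experiments.
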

\begin{proof}
    The existence of local unique solutions of \eqref{eq:Yosida_approx_linear} follows by local Lipschitz continuity of the drift term. The global extensions of these solutions can be deduced by the following Step 1.\medskip

    \textit{\underline{Step 1:} Uniform boundedness of the trajectories.} 
    Let $(x_\tau^{(j)}(t),\, t\in[0,T])_{j=1,\dots,J}$ be some solution of \eqref{eq:Yosida_approx_linear}.
    Let $x^\ast\in\R^d$ with 
    \[0\in \nabla \Phi(x^\ast) + \partial R(x^\ast)\,,\]
    or equivalently 
    \[ u^\ast := -\nabla \Phi(x^\ast)\in \partial R(x^\ast)\,.\]
    Next, define 
    \[ x_\tau^\ast:=x^\ast-\tau u^\ast\,.\]
    Note that any $z=\cJ_\tau(x)$ can be equivalently characterized by $x-z\in\tau\, \partial R(z)$. Since $x_\tau^\ast-x^\ast = \tau u^\ast \in \tau\, \partial R(x^\ast)$, we have that $x^\ast =\cJ_\tau(x_\tau^\ast)$, and, therefore,
    \[\cA_\tau(x_\tau^\ast) = \frac1\tau (x_\tau^\ast-\cJ_\tau(x_\tau^\ast)) = u^\ast\,.\]
    For $V_\tau:\R_+\to\R_+$ defined as $V_\tau(t):=\frac12\|\bar x_\tau(t)-x_\tau^\ast\|_{C(t)}^2$
    it holds
    \begin{align*}
        \frac{\dd V_\tau(t)}{\dd t} &= \langle C^{-1}(t)(\bar x_\tau(t)-x_\tau^\ast),\frac{\dd \bar x_\tau(t)}{\dd t}\rangle + \frac12\langle C^{-1}(t)\frac{\dd C(t)}{\dd t} C^{-1}(t)(\bar x_\tau(t)-x_\tau^\ast),\bar x_\tau(t)-x_\tau^\ast\rangle \\
        &=-\langle \bar x_\tau(t)-x_\tau^\ast,\cA_\tau(\bar x_\tau(t))-\cA_\tau(x_\tau^\ast)\rangle\le 0\,.
    \end{align*}
    This verifies that $V_\tau(t)\le V_\tau(0)\le 2\|\bar x_\tau(0)-x^\ast\|^2 + 2\tau^2\|u^\ast\|^2$. Moreover, we deduce that
    \begin{align*}
        \|\bar x_\tau(t)-x^\ast\|^2 \le 2\|\bar x_\tau(t)-x_\tau^\ast\|^2+2\|x_\tau^\ast-x^\ast\|^2 &\le \frac{2u_1}{u_2(t+1)} V_\tau(0) + 2\tau^2\|u^\ast\|^2\\
        &\le \frac{4u_1}{u_2} \|\bar x_\tau(0)-x^\ast\|^2 + 2(1+2\frac{u_1}{u_2})\tau^2 \|u^\ast\|^2\,.
    \end{align*}
    Hence, for any solution of \eqref{eq:Yosida_approx_linear} there exists $R_1$ independent of $\tau$ and $T$ such that
    \[ \|\bar x_\tau(t)\|^2\le R_1\,.\]\medskip

    \textit{\underline{Step 2:} Uniform $L^2$-bound for the Yosida approximation.} For the second claim define the energy
    \[E_\tau(t) = \Phi(\bar x_\tau(t)) + R_\tau(\bar x_\tau(t))\]
    where $R_\tau (x) = \min_{z\in\R^d} R(z) + \frac{1}{2\tau} \|z-x\|^2\le R(x)$ which satisfies $\cA_\tau = \nabla R_\tau$. It holds true that
    \[\frac{\dd E_\tau(t)}{\dd t} = -\langle \nabla E_\tau(t),C(t)\nabla E_\tau (t)\rangle\le 0 \]
    and, hence,
    \[\int_{0}^T\frac{\ell_1}{\ell_2(s+1)} \|\nabla E_\tau(s)\|^2\,\dd s\le E_\tau(0)\,.\]
    This implies that for all $t\in[0,T]$ it holds 
    \begin{align*}
        \int_0^t\|\cA_\tau(\bar x_\tau(s)\|^2\,\dd s&\le \frac{2\ell_2(T+1)}{\ell_1}\int_0^{T}\frac{\ell_1}{\ell_2(s+1)}\|\nabla E_\tau(s)\|^2\,\dd s+2\int_0^T\|\nabla \Phi(\bar x_\tau(s))\|^2\,\dd s\\
        &\le \frac{2\ell_2(T+1)}{\ell_1}E_\tau(0) + 4L^2\int_0^T\|\bar x_\tau(s)-x_\ast\|^2\,\dd s+4T\|\nabla\Phi(x^\ast)\|^2\,.
    \end{align*}
    Since $\|\bar x_\tau(s)-x_\tau^\ast\|^2$ is uniformly bounded (both in $T$ and $\tau$), there exists $R_2>0$ depending on $T$ but independent of $\tau\in(0,\tau_{\max}]$ such that 
    \[\int_0^t\|\cA_\tau(\bar x_\tau(s)\|^2\,\dd s\le R_2\]
    for all $t\in[0,T]$.
\end{proof}

The following lemma shows that the family $\{(x_\tau(t)^{(j)},\,t\in[0,T])_{j=1,\dots,J}\}_{\tau>0}$ forms a Cauchy sequence in $C([0,T]; \R^{d\cdot J})$ and therefore converges uniformly on $[0,T]$ to a continuous limit $(\hat x^{(j)}(t),j=1,\dots,J)_{t\ge0}$ as $\tau\to0$.
This limit will serve as a candidate solution of the original system \eqref{eq:SEKI_flow_linear}.
\begin{lemma}[Cauchy sequence]\label{lem:cauchy}
    Under the setting of \Cref{thm:main_existence}, for $T>0$ let $x_\tau = (x_\tau^{(j)}(t),\, t\in[0,T])_{j=1,\dots,J}$ and $x_\mu = (x_\mu^{(j)},\, t\in[0,T])_{j=1,\dots,J}$ be the unique solutions of \eqref{eq:Yosida_approx_linear} with common initialization $x_\tau(0) = x_\mu(0) = x_0\in\R^{d\cdot J}$ and parameters $\tau,\mu>0$. Then for all $t\in[0,T]$ and $j=1,\dots,J$,
    \[\|x_\tau^{{(j)}}(t)-x_\mu^{(j)}(t)\|^2 \le \frac{T\, R_2^2\, u_1}{4\,u_2} \cdot (\tau+\mu)\,, \]
    where $R_2>0$ is the constant from \Cref{lem:Yosidabounded}, independent of $\tau,\mu\in(0,\tau_{\max}]$. In particular, for any sequence $(\tau_n)_{n\in\mathbb N}$ with $\tau_n\to 0$, the sequence $\{(x_{\tau_{n}}^{(j)}(t),\, t\in[0,T])_{j=1,\dots,J},n\in\N \}\subset C([0,T]; \R^{d\cdot J})$ is a Cauchy sequence in $C([0,T]; \R^{d\cdot J})$ with respect to the $\|\cdot\|_\infty$-norm.
\end{lemma}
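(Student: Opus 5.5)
The plan is to follow the classical Yosida–Cauchy argument (as in Theorem~2.1, Chapter~3 of \cite{aubin1984differential}), carried out in the time-dependent metric $\|\cdot\|_{C(t)}$, exactly as in the uniqueness proof of \Cref{prop:uniqueness}.

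\textbf{Step 1: reduce to the means.} Both approximations share the initialization $x_0$. By \Cref{lem:cov}, the covariance $C_\tau(t)=C_\mu(t)=C(t)$ does not depend on the Yosida parameter. The deviation equations in \eqref{eq:Yosida_approx_linear} are linear ODEs with the same coefficient $C(t)(A^\top\Gamma^{-1}A+C_0^{-1})$ and the same initial data. Hence $e_\tau^{(j)}(t)=e_\mu^{(j)}(t)$ for all $t$, and
\[
x_\tau^{(j)}(t)-x_\mu^{(j)}(t)=\bar x_\tau(t)-\bar x_\mu(t).
\]
It therefore suffices to bound the difference of the means.

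\textbf{Step 2: energy identity.} Set $W(t):=\frac12\|\bar x_\tau(t)-\bar x_\mu(t)\|_{C(t)}^2$ and differentiate as in \Cref{prop:uniqueness}. The term coming from $\dot C^{-1}$ cancels the quadratic part of the linear drift except for a nonpositive remainder. This leaves
\[
\frac{\dd W}{\dd t}\le -\big\langle \bar x_\tau-\bar x_\mu,\ \cA_\tau(\bar x_\tau)-\cA_\mu(\bar x_\mu)\big\rangle .
\]

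\textbf{Step 3: Brezis-type monotonicity estimate (main obstacle).} This is the key step. Decompose $x=\cJ_\tau x+\tau\cA_\tau x$. Since $\cA_\tau x\in\cA(\cJ_\tau x)$ and $\cA$ is monotone, $\langle \cJ_\tau x-\cJ_\mu z,\cA_\tau x-\cA_\mu z\rangle\ge0$. Consequently
\[
\langle x-z,\cA_\tau x-\cA_\mu z\rangle\ge \langle \tau\cA_\tau x-\mu\cA_\mu z,\cA_\tau x-\cA_\mu z\rangle .
\]
Expanding the right-hand side and using $ab\le\frac14 a^2+b^2$-type bounds on the cross terms gives
\[
\langle x-z,\cA_\tau x-\cA_\mu z\rangle\ge -\tfrac{\tau}{4}\|\cA_\mu z\|^2-\tfrac{\mu}{4}\|\cA_\tau x\|^2 .
\]
Inserting this into Step 2 and integrating, the $L^2$ bound of \Cref{lem:Yosidabounded} yields
\[
W(t)\le \tfrac{1}{4}(\tau+\mu)R_2 .
\]
Note that $W(0)=0$, and the bound holds uniformly in $t\in[0,T]$. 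I expect the delicate point to be matching the stated constant $T R_2^2$. This likely requires a cruder bookkeeping, for instance bounding $R_2$ by $\max(R_2,R_2^2)$ and absorbing a factor $T$. Since these are only harmless overestimates of the bound just derived, I would simply note them.

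\textbf{Step 4: back to the Euclidean norm.} By \eqref{eq:eigenvalue_bounds},
\[
\|v\|^2\le \lambda_{\max}(t)\|v\|_{C(t)}^2\le \tfrac{u_1}{u_2}\|v\|_{C(t)}^2 .
\]
This gives $\|x^{(j)}_\tau(t)-x^{(j)}_\mu(t)\|^2\lesssim \frac{u_1}{u_2}R_2(\tau+\mu)$, uniformly on $[0,T]$. Taking the supremum over $t$ shows that for $\tau_n\to0$ the sequence is Cauchy in $\|\cdot\|_\infty$, and completeness of $C([0,T];\R^{d\cdot J})$ provides the limit.
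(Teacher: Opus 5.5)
Your proposal is correct and follows the paper's proof essentially step for step. The shared ingredients are the energy $\frac12\|\bar x_\tau-\bar x_\mu\|_{C(t)}^2$, the splitting $x=\cJ_\tau x+\tau\cA_\tau x$ combined with monotonicity of $\cA$, the same Young inequalities, the $L^2$ bound of \Cref{lem:Yosidabounded}, and the observation that the deviations $e_\tau^{(j)}=e_\mu^{(j)}$ coincide.

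On the constant: your bound $\frac{u_1R_2}{2u_2}(\tau+\mu)$ is what the argument actually yields, and the paper's factor $tR_2^2$ does not follow from an integrated $L^2$ bound. Your bound implies the stated form once $R_2$ is enlarged so that $TR_2\ge 2$, which is legitimate because $R_2$ is only an existence constant. By contrast, ``absorbing a factor $T$'' does not work when $T$ is small.
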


\begin{proof}
    For $T>0$, let $x_\tau = (x_\tau^{(j)},\, t\in[0,T])_{j=1,\dots,J}$ and $x_\mu = (x_\mu^{(j)},\, t\in[0,T])_{j=1,\dots,J}$ be the unique solutions of \eqref{eq:Yosida_approx_linear} with initialization $x_\tau(0) = x_\mu(0) = x_0\in\R^{d\cdot J}$ and arbitrary $\tau,\mu\in(0,\tau_{\max}]$ for arbitrary $\tau_{\max}>0$.
    Since the covariance evolution is independent of $\tau$, it holds that 
    \(
    C_\tau(t)=C_\mu(t)=C(t)
    \)
    for all $t\in[0,T]$, where $C(t)$ is given by \eqref{eq:cov_solution}.

    By chain rule, we have
    \begin{align*}
        \frac{{\mathrm d}\frac12\|\bar x_\tau(t)-\bar x_\mu(t)\|_{C(t)}^2}{{\mathrm d}t} &= \Big\langle \bar x_\tau(t)-\bar x_\mu(t), C^{-1}(t) \big(\frac{{\mathrm d} \bar x_\tau(t)}{{\mathrm d}t} - \frac{{\mathrm d} x_\mu(t)}{{\mathrm d}t}\big)\Big\rangle\\
        &\quad + \frac12\Big\langle C^{-1}(t)(\bar x_\tau(t)-\bar x_\mu(t)), \frac{{\mathrm d} C(t)}{{\mathrm d} t} C^{-1}(t)(\bar x_\tau(t)-\bar x_\mu(t)) \Big\rangle\\
        &= \Big\langle C^{-1}(t)\big(\bar x_\tau(t)-\bar x_\mu(t)\big),  \frac{{\mathrm d} \bar x_\tau(t)}{{\mathrm d}t} - \frac{{\mathrm d} x_\mu(t)}{{\mathrm d}t}\Big\rangle\\
        & \quad + \Big\langle (\bar x_\tau(t)-\bar x_\mu(t)), (C_0^{-1} + A^\top \Gamma^{-1} A)(\bar x_\tau(t)-\bar x_\mu(t)) \Big\rangle\\
        & = -\langle \bar x_\tau(t)-\bar x_\mu(t),\cA_\tau(\bar x_\tau(t))-\cA_\mu(\bar x_\mu(t))\rangle \,.
    \end{align*}
    Using the initial condition $x_\tau(0) = x_\mu(0)$ the above differential equation can be written in integral form
    \begin{align*}
        &\frac12 \|\bar x_\tau(t)-\bar x_\mu(t)\|_{C(t)}^2\\ & = -\int_0^t \langle \bar x_\tau(s)-\bar x_\mu(s),\cA_\tau(\bar x_\tau(s)) - \cA_\mu (\bar x_\mu (s))\rangle\,{\mathrm d}s\\
        &=-\int_0^t \langle \tau \cA_\tau (\bar x_\tau(s)) + \cJ_\tau (\bar x_\tau(s)) - \mu\cA_\mu(\bar x_\mu(s))-\cJ_\mu(\bar x_\mu(s)),\cA_\tau(\bar x_\tau(s)) - \cA_\mu (\bar x_\mu (s))\rangle\,{\mathrm d}s\\
        &= -\int_0^t \langle \tau \cA_\tau(\bar x_\tau(s))- \mu\cA_\mu(\bar x_\mu(s)),\cA_\tau(\bar x_\tau(s)) - \cA_\mu (\bar x_\mu (s))\rangle\,{\mathrm d}s\\
        &\, \quad -\int_0^t \langle \cJ_\tau (\bar x_\tau(s))-\cJ_\mu(\bar x_\mu(s)),\cA_\tau(\bar x_\tau(s)) - \cA_\mu (\bar x_\mu (s))\rangle\,{\mathrm d}s\\
        &\le -\int_0^t \langle \tau \cA_\tau(\bar x_\tau(s))- \mu\cA_\mu(\bar x_\mu(s)),\cA_\tau(\bar x_\tau(s)) - \cA_\mu (\bar x_\mu (s))\rangle\,{\mathrm d}s
    \end{align*}
    where we used $\langle \cJ_\tau (\bar x_\tau(s))-\cJ_\mu(\bar x_\mu(s)),\cA_\tau(\bar x_\tau(s)) - \cA_\mu (\bar x_\mu (s))\rangle\ge0$ since $\cA_\tau(\bar x_\tau(s)) \in \cA(J_\tau(\bar x_\tau(s))$ and $\cA_\mu(\bar x_\mu(s)) \in \cA(J_\mu(\bar x_\mu(s))$ for all $s\ge0$. The last expression can be further rewritten as
    \begin{align*}
        -\int_0^t \langle \tau \cA_\tau(\bar x_\tau(s))&- \mu\cA_\mu(\bar x_\mu(s)),\cA_\tau(\bar x_\tau(s)) - \cA_\mu (\bar x_\mu (s))\rangle\,{\mathrm d}s\\ &= \tau \int_0^t \langle \cA_\tau(\bar x_\tau(s)),\cA_\mu(\bar x_\mu(s))\rangle\, {\mathrm d}s +  \mu \int_0^t \langle \cA_\mu(\bar x_\mu(s)),\cA_\tau(\bar x_\tau(s))\rangle\, {\mathrm d}s \\
        &\quad -\tau\int_0^t\|\cA_\tau(\bar x_\mu(s))\|^2\,{\mathrm d}s -\mu\int_0^t\|\cA_\mu(\bar x_\mu(s))\|^2\,{\mathrm d}s\,.
    \end{align*}
    Using Cauchy-Schwarz inequality followed by Young's inequality implies
    \[ \tau \langle\cA_\tau(\bar x_\tau(s)),\cA_\mu(\bar x_\mu(s))\rangle \le \tau \|\cA_\tau(\bar x_\tau(s))\|^2 + \frac{\tau}{4} \|\cA_\mu(\bar x_\mu(s))\|^2\]
    and similarly, 
    \[ \mu \langle\cA_\mu(\bar x_\mu(s)),\cA_\tau(\bar x_\tau(s))\rangle \le \mu \|\cA_\mu(\bar x_\mu(s))\|^2 + \frac{\mu}{4} \|\cA_\tau(\bar x_\tau(s))\|^2\,.\]
    By \Cref{lem:Yosidabounded} we deduce that
    \begin{align*}
        \frac12 \|\bar x_\tau(t)-\bar x_\mu(t)\|_{C(t)}^2\le t \frac{\tau+\mu}{4} R_2^2
    \end{align*}
    where $R_2>0$ is independent of $\tau$ and $\mu$. By \Cref{lem:cov} the largest eigenvalue of $C(t)$ satisfies 
    \[ \lambda_{\max}(t)\le \frac{u_1}{u_2(t+1)}\]
    such that
    \[ \frac12\|\bar x_\tau(t)-\bar x_\mu(t)\|^2 \le \frac{\lambda_{\max}(t)}2 \|\bar x_\tau(t)-\bar x_\mu(t)\|_{C(t)}^2 \le \frac{T\, R_2^2\, u_1}{u_2} \cdot \frac{\tau+\mu}{4}\]
    proving the first assertion. For the second claim, we note that for any $t\in [0,T]$, $j\in\{1,\dots,J\}$ and $\mu,\tau>0$ it holds $\|x_\tau^{(j)}(t)-\bar x_\tau(t)-(x_\mu^{(j)}-\bar x_\mu(t))\|=0$, so that
    \begin{align*}
        \frac{1}4\|x_\tau^{(j)}(t) - x_\mu^{(j)}(t)\|^2 &\le \frac12 \|x_\tau^{(j)}(t)-\bar x_\tau(t)-(x_\mu^{(j)}-\bar x_\mu(t))\|^2 + \frac{1}2\|\bar x_\tau(t)-\bar x_\mu(t)\|^2\\
        &= \frac{1}2\|\bar x_\tau(t)-\bar x_\mu(t)\|^2 \le \frac{T\, R_2^2\, u_1}{u_2} \cdot \frac{\tau+\mu}{4}\,.
    \end{align*}
    This proves the second assertion.
\end{proof}

\subsection{Proof of \Cref{thm:main_existence}}
We are now in a position to prove \Cref{thm:main_existence} by passing to the limit $\tau \to 0$ in the Yosida approximation.
\begin{proof}[Proof of \Cref{thm:main_existence}]
    Let $(\tau_n)_{n\in\mathbb N}$ be a sequence with $\tau_n \downarrow 0$, and let 
    \(
    x_{\tau_n} = (x_{\tau_n}^{(j)}(t),\, t\in[0,T])_{j=1,\dots,J}
    \)
    denote the corresponding solutions of \eqref{eq:Yosida_approx_linear}.
    \medskip
    
    \textit{\underline{Step 1:} Strong convergence of the trajectories.}  
    By \Cref{lem:cauchy}, the sequence $(x_{\tau_n})_{n\in\mathbb N}$ is Cauchy in $C([0,T];\R^{d\cdot J})$ and therefore converges uniformly to some limit
    \[
    \hat x = (\hat x^{(j)}(t),\, t\in[0,T])_{j=1,\dots,J} \in C([0,T];\R^{d\cdot J}).
    \]
    In particular, the convergence also holds strongly in $L^2([0,T];\R^{d\cdot J})$.
    \medskip
    
    \textit{\underline{Step 2:} Convergence of the resolvent.}
    By definition of the Yosida approximation,
    \[
    x_\tau - \mathcal J_\tau(x_\tau) = \tau \mathcal A_\tau(x_\tau).
    \]
    Using \Cref{lem:Yosidabounded}, this implies
    \[
    \int_0^T \|x_{\tau_n}^{(j)}(t) - \mathcal J_{\tau_n}(x_{\tau_n}^{(j)}(t))\|^2 \,{\mathrm d}t
    \le \tau_n R_2 \to 0.
    \]
    Hence,
    \[
    \mathcal J_{\tau_n}(x_{\tau_n}^{(j)}(\cdot)) \to \hat x^{(j)}(\cdot)
    \quad \text{strongly in } L^2([0,T];\R^d)\,.
    \]
    The same argument applies to the ensemble mean:
    \[
    \mathcal J_{\tau_n}(\bar x_{\tau_n}(\cdot)) \to \bar{\hat x}(\cdot)
    \quad \text{in } L^2([0,T];\R^d)\,.
    \]
    \medskip
    
    \textit{\underline{Step 3:} Convergence of the smooth part.}
    By continuity of $\nabla \Phi$, it follows that
    \[
    \nabla \Phi(x_{\tau_n}^{(j)}(\cdot))
    \to
    \nabla \Phi(\hat x^{(j)}(\cdot))
    \quad \text{strongly in } L^2([0,T];\R^d)\,.
    \]
    \medskip
    
    \textit{\underline{Step 4:} Weak convergence of the subgradients.}
    \Cref{lem:Yosidabounded} states that
    \[
    \mathcal A_{\tau_n}(\bar x_{\tau_n}(\cdot))
    \]
    is bounded in $L^2([0,T];\R^d)$. Hence, there exists a subsequence (not relabeled) and a limit $w \in L^2([0,T];\R^d)$ such that
    \[
    \mathcal A_{\tau_n}(\bar x_{\tau_n}(\cdot)) \rightharpoonup w(\cdot)
    \quad \text{weakly in } L^2([0,T];\R^d).
    \]
    Since $A_{\tau_n}(\bar{x}_{\tau_n}) \in A(J_{\tau_n}(\bar{x}_{\tau_n}))$, 
$J_{\tau_n}(\bar{x}_{\tau_n}) \to \hat{\bar{x}}$ strongly, and 
$A_{\tau_n}(\bar{x}_{\tau_n}) \rightharpoonup w$ weakly in $L^2$, 
the strong--weak closedness of maximal monotone operators 
\cite[Proposition~1.2, Chapter~3]{aubin1984differential} yields
\[
w(t) \in A(\hat{\bar{x}}(t)) \quad \text{for a.e. } t \in [0,T]\,.
\]
    \medskip
    
    \textit{\underline{Step 5:} Weak compactness of time derivatives.}
    by \Cref{lem:Yosidabounded}, we have that 
    \begin{align*} 
        \Big\|\frac{{\mathrm d} x_\tau^{(j)}(t)}{{\mathrm d}t}\Big\| &\le \Big\|\frac{{\mathrm d} (x_\tau^{(j)}(t)-\bar x_\tau(t))}{{\mathrm d}t}\Big\| + \Big\|\frac{{\mathrm d} \bar x_\tau(t)}{{\mathrm d}t}\Big\|\\ &\le \|C(t)\|^2 \|A^\top \Gamma^{-1}A+C_0^{-1}\| +  \Big\|\frac{{\mathrm d} \bar x_\tau(t)}{{\mathrm d}t}\Big\| \\ &\le \lambda_{\max}^2(t)\|A^\top \Gamma^{-1}A+C_0^{-1}\|+\lambda_{\max}(t) \Big(\|A_{\tau}(\bar x_\tau(t))\| + \|A^\top \Gamma^{-1} (A\bar x_\tau(t)-y)\| \Big) 
    \end{align*}
    so that 
    \[ 
        \int_{0}^T \Big\|\frac{{\mathrm d} x_\tau^{(j)}(t)}{{\mathrm d}t}\Big\|^2\,\dd t \le 4T\frac{u_1^2}{u_2^2}+4\frac{u_1}{u_2} \Big( R_2 + T(\|A^\top \Gamma^{-1} y\|^2 + R_1 \|A^\top \Gamma^{-1} A\|^2 )\Big) 
    \]
    and, therefore, $\{ (\frac{{\mathrm d}x_{\tau_n}^{(j)}(t)}{{\mathrm d}t},\, t\in[0,T])_{j=1,\dots,J},n\in\N\}$ is bounded in $L^2([0,T];\R^{d\cdot J})$. 
    Hence, there exists a sub-sequence $\{ (\frac{{\mathrm d}x_{\tau_{n_k}}^{(j)}(t)}{{\mathrm d}t},\, t\in[0,T])_{j=1,\dots,J},k\in\N\}$ that converges weakly to some $(\frac{{\mathrm d}\tilde x^{(j)}(t)}{{\mathrm d}t},\, t\in[0,T])_{j=1,\dots,J}$ in $L^2([0,T];\R^{d\cdot J})$. By strong convergence of $\{(x_{\tau_{n}}^{(j)}(t),\, t\in[0,T])_{j=1,\dots,J},n\in\N \}\subset C([0,T]; \R^{d\cdot J})$ in $L^2([0,T];\R^{d\times J})$, we have that
    $(\frac{{\mathrm d}\tilde  x^{(j)}(t)}{{\mathrm d}t})_{j=1,\dots,J} = (\frac{{\mathrm d}\hat  x^{(j)}(t)}{{\mathrm d}t})_{j=1,\dots,J}$ for almost all $t\in[0,T]$.
    \medskip
    
    \textit{\underline{Step 6:} Passage to the limit.} 
    Passing to the limit $k\to\infty$, we obtain
    \begin{align*}
        0 = \frac{{\mathrm d}x_{\tau_{n_k}}^{(j)}(\cdot)}{\mathrm {d}t} + C(t) \nabla \Phi({x_{\tau_{n_k}}^{(j)}(\cdot)) + C(t) \cA_{\tau_{n_k}}(\bar x_{\tau_{n_k}}(\cdot))} \rightharpoonup \frac{{\mathrm d}\hat x^{(j)}(\cdot)}{\mathrm {d}t} + C(t) \nabla \Phi(\hat x^{(j)}(\cdot)) + C(t) w(\cdot)
    \end{align*}
    with $w(t)\in\cA(\bar{\hat x}(t))$ for almost every $t\in[0,T]$. Hence, 
    \[\frac{{\mathrm d}\hat x^{(j)}(t)}{\mathrm {d}t} + C(t) \nabla \Phi(\hat x^{(j)}(t)) + C(t) w(t) = 0\]
    for almost every $t\in[0,T]$ meaning $(\hat x^{(j)}(t),\, t\in[0,T])_{j=1,\dots,J}\in C([0,T]; \R^{d\cdot J})$ is a solution of \eqref{eq:SEKI_flow_linear}. Since $x_{\tau_n} \to \hat{x}$ strongly in $C([0,T];\mathbb{R}^{d\cdot J})$ and $\big(\frac{dx_{\tau_n}}{dt}\big)$ is bounded in $L^2([0,T];\mathbb{R}^{d\cdot J})$, the limit trajectory $\hat{x}$ is absolutely continuous on $[0,T]$.
    \medskip
    
    Uniqueness follows from \Cref{prop:uniqueness}, which completes the proof.
\end{proof}

\section{From continous- to discrete-time: Subgradient ensemble Kalman inversion}\label{sec:discrete}
The continuous-time formulation \eqref{eq:SEKI_flow_linear} is best understood as an idealized model that reveals the underlying covariance-preconditioned (sub)gradient-flow structure and enables a clean well-posedness analysis via maximal monotone operator theory. However, it does not uniquely prescribe a practical time discretization. In particular, the preconditioner $C(t)$ is time-dependent and could also be low-rank, so discretizations based on implicit steps or operator inversions, such as proximal or splitting schemes, are not straightforward to implement in this setting. Proximal or splitting schemes would require evaluating the proximal operator in the time-dependent metric induced by $C^{-1}(t)$ which involves inversion of the empirical covariance.

For this reason, the continuous-time analysis is restricted to structural properties and well-posedness, while convergence is studied at the level of a discrete-time scheme that is directly implementable. This distinction is further motivated by the fact that certain properties available in continuous time, most notably constraint viability for indicator-type regularizers, may fail under naive explicit discretizations. The discrete-time method therefore serves as the primary algorithmic object, whereas the continuous-time dynamics provides a principled guideline for its design.

Motivated by these considerations, we focus on an explicit forward discretization, which preserves the derivative-free structure and avoids matrix inversions involving the empirical covariance. The resulting subgradient-based ensemble Kalman update reads
\begin{equation} \label{eq:SEKI_discrete}
x_{k+1}^{(j)} = x_k^{(j)} - h\, C_k A^\top \Gamma^{-1} (A x_k^{(j)} - y)
- h\, C_k g_{\bar x_k},
\end{equation}
where $g_{\bar x_k} \in \partial R(\bar x_k)$ denotes a measurable selection of the subdifferential, $h>0$ is the step size, and 
\[
C_k = C(x_k) = \frac{1}{J}\sum_{j=1}^J (x_k^{(j)} - \bar x_k)(x_k^{(j)} - \bar x_k)^\top
\]
is the empirical covariance matrix.

\begin{assumption}\label{ass:smoothloss}
There exist constants $\mu, L > 0$ with $\mu \le L$ such that
\[
\mu I \preceq S := A^\top \Gamma^{-1} A \preceq L I,
\]
meaning that the data misfit $\Phi$ is $\mu$-strongly convex and $L$-smooth.
\end{assumption}

\subsection{Ensemble Collapse}
The ensemble collapse is studied in the following section. The analysis follows closely to Section~3 in \cite{weissmann2024ensemble}. We revisit several technical steps to be self-contained and make certain implicit assumptions explicit, resulting in slightly more restrictive bounds on $h$. \medskip

Define the ensemble spread of the particles $e_k^{(j)}:=x_k^{(j)}-\bar x_k$, $j=1,\dots,J$, $k\in\N$ and consider its evolution
\begin{equation}\label{eq:spread}
    e_{k+1}^{(j)} = x_{k}^j-\bar x_k - h C_k A^\top \Gamma^{-1}(Ax_k^{(j)}-A\bar x_k) = e_k^{(j)} - h C_k A^\top \Gamma^{-1} Ae_k^{(j)}\,.
\end{equation}
The aim is to quantify upper and lower bounds on the eigenvalues of $C_k$. Note that the sample covariance evolves as
\begin{equation}\label{eq:samplecov}
    C_{k+1} = C_k - 2h C_kA^\top AC_k + h^2 C_k A^\top A C_k A^\top A C_k = C_k - 2hC_k S C_k + h^2 C_k S C_k S C_k\,.
\end{equation}
It is useful to define $E_k := \frac{1}{J} \sum_{j=1}^J \|e_k^{(j)}\|^2$ which is related to the Frobenius norm of $C_k$ as follows
\begin{align*} 
\frac1JE_k^2 = \frac1J\Big(\frac1J\sum_{j=1}^J\|e_k^{(j)}\|^2\Big)^2\le \frac1{J^2} \sum_{j=1}^J\|e_k^{(j)}\|^4 \le  \frac{1}{J^2}\sum_{j,l=1}^J \langle e_k^{(j)},e_k^{(l)}\rangle^2  &= \|C_k\|_{\cF}^2\\ &\le \frac{1}{J^2}\sum_{j,l=1}^J\|e_k^{(j)}\|^2 \|e_k^{(l)}\|^2 = E_k^2\,, 
\end{align*}
where we used Jensen's inequality in the first inequality and Cauchy-Schwarz inequality in the last inequality.\medskip

The next lemma describes lower and upper bounds on the sample covariance.
\begin{lemma}\label{lem:cov_discretetime}
    Suppose that \Cref{ass:regularization} and \Cref{ass:smoothloss} are in place. Let $(x_k^{(j)},\, k\in\mathbb{N})_{j=1,\dots,J}$ be generated by \eqref{eq:SEKI_discrete} with fixed step size $h>0$ and 
initial ensemble $(x_0^{(j)})_{j=1,\dots,J}$, such that $C_0\succ \sigma_0 I$ for some $\sigma_l>0$. Moreover, let 
$$
h\le \min\Bigg(\frac{\mu}{JL^2},\frac{J}{\mu},\frac{1}{2L}\Bigg) E_0^{-1}.
$$ 
Then for all $k\ge0$ it holds true that \[ \frac{\sigma_l}{k+1} I \preceq C_k \preceq \min\Big(\frac{\sigma_u}{(k+1)},E_0\Big) I = \min\Big(\frac{J}{h\mu(k+1)},E_0\Big) I\,,\]
for $\sigma_u = \frac{J}{h\mu}$ and $\sigma_l = \frac{\sigma_0}{1+2hL\sigma_0}$. 
\end{lemma}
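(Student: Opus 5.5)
The proof works entirely with the covariance recursion \eqref{eq:samplecov}, $C_{k+1}=C_k-2hC_kSC_k+h^2C_kSC_kSC_k$. The mean update and the subgradient selection $g_{\bar x_k}$ never enter, because \eqref{eq:spread} shows the deviations evolve linearly and independently of $R$. The key observation is that $C_{k+1}=C_k^{1/2}(I-hC_k^{1/2}SC_k^{1/2})^2C_k^{1/2}$, so with $M_k:=C_k^{1/2}SC_k^{1/2}\succeq0$ everything is a spectral statement about $M_k$. The plan is to establish three facts by induction on $k$: the trivial upper bound $C_k\preceq E_0 I$, the lower bound $C_k\succeq \frac{\sigma_l}{k+1}I$, and the decay $C_k\preceq\frac{\sigma_u}{k+1}I$.

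\textbf{Step 1 (monotonicity and the bound $E_0$).} Under the induction hypothesis $\|C_k\|\le E_0$, the step-size condition $h\le \frac{1}{2LE_0}$ gives $\|hM_k\|\le hL\lambda_{\max}(C_k)\le \tfrac12$. Hence $0\preceq (I-hM_k)^2\preceq I$, so $C_{k+1}\preceq C_k$. In particular $\mathrm{tr}(C_k)=E_k$ is nonincreasing, and $\lambda_{\max}(C_k)\le \mathrm{tr}(C_k)\le E_0$. This closes the induction for the upper bound $E_0$ and keeps $hM_k$ uniformly small in all later steps.

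\textbf{Step 2 (lower bound).} Since $C_k\succ0$ and $(I-hM_k)^2\succeq (1-hL\lambda_{\max}(C_k))^2 I\succ0$, the matrix $C_{k+1}$ is invertible. Write $C_{k+1}^{-1}=C_k^{-1/2}(I-hM_k)^{-2}C_k^{-1/2}$. Using $(1-x)^{-2}\le 1+2x+cx^2$ for $x\le\tfrac12$, I expect to obtain
\[
C_{k+1}^{-1}\preceq C_k^{-1}+2hS+\mathcal O(h^2 S C_k S)\preceq C_k^{-1}+2h'L\, I .
\]
The aim is an increment of at most $2hL$ in the inverse, up to a slightly inflated constant; the higher-order terms are absorbed using $hL\|C_k\|\le\tfrac12$. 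Summing gives $\lambda_{\max}(C_k^{-1})\le \sigma_0^{-1}+2hLk$. To match the stated form I would then use
\[
\frac{1}{\sigma_0^{-1}+2hLk}\ge\frac{\sigma_0}{(1+2hL\sigma_0)(k+1)} .
\]
This inequality holds because $(1+2hL\sigma_0)(k+1)\ge 1+2hL\sigma_0 k$. If the constant in the second-order term does not come out cleanly, I would instead bound $(1-x)^{-2}\le 1+4x$ for $x\le 1/2$. That may force a slightly different constant in $\sigma_l$, and the precise constant is exactly what needs checking.

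\textbf{Step 3 (decay $\sigma_u/(k+1)$).} This is the main obstacle, since strong convexity must be converted into contraction of the largest eigenvalue. Take the top eigenvector $v$ of $C_{k+1}$ and use $S\succeq\mu I$. The map $\lambda\mapsto\lambda-2h\mu\lambda^2+h^2L\mu\lambda^3$ should control $\lambda_{\max}(C_{k+1})$ in terms of $\lambda=\lambda_{\max}(C_k)$. The difficulty is that $C_k$ and $S$ do not commute, so this scalar reduction is not immediate. I would try two routes:
\begin{itemize}
\item work with traces, using $\mathrm{tr}(C_kSC_k)\ge\mu\|C_k\|_{\cF}^2\ge \frac{\mu}{J}E_k^2$ from the Frobenius inequality just established, which yields $E_{k+1}\le E_k-\frac{2h\mu}{J}E_k^2+h^2L^2E_k^3$;
\item or, if the trace route is inconvenient, pass directly through the Frobenius norm.
\end{itemize}
With the trace route, the condition $h\le\frac{\mu}{JL^2E_0}$ lets the cubic term absorb at most half of the quadratic one, giving $E_{k+1}\le E_k-\frac{h\mu}{J}E_k^2$. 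The standard reciprocal argument, $E_{k+1}^{-1}\ge E_k^{-1}+\frac{h\mu}{J}$, then gives $E_k\le \frac{J}{h\mu(k+1)}$ once $E_0^{-1}\ge \frac{h\mu}{J}$, which is the remaining step-size condition $h\le \frac{J}{\mu E_0}$. Finally, $\lambda_{\max}(C_k)\le E_k$, which completes the upper bound.
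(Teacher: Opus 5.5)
Your Steps 1 and 3 are fine. Step 3 is exactly the paper's argument for the upper bound: the trace recursion, absorbing the cubic term via $h\le\frac{\mu}{JL^2}E_0^{-1}$, and reciprocal summation using $h\le\frac{J}{\mu}E_0^{-1}$. Step 1 is a valid alternative way to get $\lambda_{\max}(C_k)\le E_k\le E_0$. Incidentally, your inverse formula removes the non-commutativity worry in Step 3 altogether, since $(1-m)^{-2}\ge 1+2m$ gives $C_{k+1}^{-1}\succeq C_k^{-1}+2hS\succeq C_k^{-1}+2h\mu I$.

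The genuine gap is Step 2, which does not deliver the stated $\sigma_l$. A per-step increment of $2hL$ for $\lambda_{\max}(C_k^{-1})$ is not available in general. Take $d=1$, $J=2$, $S=\mu=L$ and $hLc_0=\tfrac12$, which the step-size condition allows. Then $c_1=c_0/4$, i.e.\ $1/c_1=8hL$, and this exceeds $\sigma_0^{-1}+2hL$ whenever $\sigma_0>c_0/3$. The reason is that $(1-m)^{-2}=1+m\phi(m)$ with $\phi(m)=\frac{2-m}{(1-m)^2}>2$. Your fallback $(1-x)^{-2}\le 1+4x$ is false for $x\in(\frac{7-\sqrt{17}}{8},\frac12]$; at $x=\tfrac12$ it reads $4\le 3$. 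The correct constant $c=8$ yields only $C_{k+1}^{-1}\preceq C_k^{-1}+6hL\,I$, which by itself does not give $\sigma_l$.

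Do not try to recover the $2hL$ increment via the paper's route either. It rests on $C_{k+1}\succeq C_k-hC_kSC_k$, but $C_{k+1}-(C_k-hC_kSC_k)=-hC_k^{1/2}M_k(I-hM_k)C_k^{1/2}\preceq 0$, so that inequality runs the other way. The same scalar example violates the paper's intermediate bound $\lambda_{\min}(C_k)\ge(\sigma_0^{-1}+2hLk)^{-1}$.

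What you are missing is that the stated bound has slack. Note that $\frac{\sigma_l}{k+1}=\bigl((k+1)\sigma_0^{-1}+2hL(k+1)\bigr)^{-1}$, and $\sigma_0^{-1}>2hL$ because $\sigma_0<\lambda_{\min}(C_0)\le E_0\le\frac{1}{2hL}$. So it suffices to prove $\lambda_{\max}(C_k^{-1})\le\sigma_0^{-1}+(4k+2)hL$; for $k=0$ this is trivial.

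\begin{itemize}
\item Since $\phi$ is increasing, $C_{k+1}^{-1}\preceq C_k^{-1}+\phi(\|hM_k\|)\,hS\preceq C_k^{-1}+\phi(\|hM_k\|)\,hL\,I$.
\item At $k=0$ you have $\|hM_0\|\le\tfrac12$, so $\phi\le 6$.
\item For $k\ge1$, the matrix $N_k:=S^{1/2}C_kS^{1/2}$ has the same spectrum as $M_k$ and satisfies $N_{k+1}=N_k(I-hN_k)^2$. Hence $\|hM_1\|\le\max_{m\in[0,1/2]}m(1-m)^2=\frac{4}{27}$, and $m(1-m)^2\le m$ keeps $\|hM_k\|\le\frac{4}{27}$ for all $k\ge1$. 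This gives $\phi(\|hM_k\|)\le\frac{1350}{529}<2.6$.
\end{itemize}

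Summing these increments gives $\lambda_{\max}(C_k^{-1})\le\sigma_0^{-1}+6hL+2.6\,hL(k-1)\le\sigma_0^{-1}+(4k+2)hL$ for $k\ge1$. This is exactly the stated $\sigma_l$. For $d\ge2$ the crude uniform increment $6hL$ already suffices, since then $\sigma_0<E_0/d$ gives $\sigma_0^{-1}>2dhL\ge 4hL$.
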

\begin{proof}
    We begin with the upper bound and prove the lower bound in the second step.\medskip
    
    \textit{\underline{Step 1:} Upper bound.} The iteration of $E_k$ can be written and bounded as follows:
    \begin{align*}
        E_{k+1} = E_k - 2h \frac{1}{J}\sum_{j=1}^J \langle e_k^{(j)},C_k S e_k^{(j)}\rangle + h^2 \|C_k Se_j^{(j)}\|^2 &\le E_k - 2h \mu \|C_k\|_{\cF}^2 + h^2 L^2 \|C_k\|_{\cF}^2 E_k\\
        &\le E_k - \frac{2h\mu}J E_k^2 + h^2L^2 E_k^3\,.
    \end{align*}
    Provided that $h\le \frac{\mu}{JL^2} E_0^{-1}$ it holds $h^2L^2E_0^3\le \frac{h\mu}{J}E_0^2$ and hence, $E_1\le E_0$. This implies $h\le \frac{\mu}{JL^2} E_1^{-1}$ and therefore, that $E_2\le E_1$. By induction, we have 
    \[h^2 L^2 E_k^3 \le \frac{h\mu}{J} E_k^2\]
    for all $k\in\N$ meaning that $E_k$ is decreasing with
    \[ E_{k+1} \le E_k-\frac{h\mu}{J}E_k^2\,.\]
    If $E_{k_0} = 0$ for some $k_0\in\N$, then $E_k = 0$ for all $k\ge k_0$ and the to be derived upper bounds always hold. Thus, without loss of generality assume that $E_k > 0$ for all $k\in\N$, then 
    \[ \frac{1}{E_{k+1}}\ge \frac{1}{E_k} \frac{1}{1-\frac{h\mu}{J} E_k}\,.\]
    By assumption $\frac{h\mu}{J} E_k\le \frac{h\mu}{J} E_0 < 1$ holds so that, using $(1-u)^{-1} \ge 1+u$ for $u\in[0,1)$, we deduce that
    \[ \frac{1}{E_{k+1}}\ge \frac{1}{E_k} + h \ge \dots \ge \frac{1}{E_0} + h(k+1)\,.\]
    This verifies that 
    \[E_k \le \frac{1}{\frac{h\mu}{J} k + \frac{1}{E_0}}\,.\]
    Note that $\frac{h\mu}{J} k + \frac{1}{E_0} \ge (\frac{1}{E_0}\wedge \frac{h\mu}{J}) (k+1) \ge \frac{h\mu}{J} (k+1)$ by the assumption on $h$. In summary, we have verified that
    \begin{equation}\label{eq:upperboundensemblespread} 
    E_k \le \frac{\sigma_u}{k+1},\quad \sigma_u = \frac{J}{h\mu}
    \end{equation}
    and therefore, 
    \[ C_{k} \preceq \min\big(\frac{\sigma_u}{k+1},E_0\Big) I\,.\]\medskip
    
    \textit{\underline{Step 2:} Lower bound.} For the lower bound on $C_k$, assume that $C_0\succeq \sigma_0 I$ for some $\sigma_0>0$. The goal is to select $h>0$ sufficiently small such that
    \begin{equation}\label{eq:suffcond}
    C_k S C_k \succeq C_k S C_k S C_k\,.
    \end{equation}
    In order to do so, define $Q_k = C_k^{\frac12}S C_k^{\frac12}$ so that \eqref{eq:suffcond} writes as
    \[C_k^{\frac12} Q_k C_k^{\frac12} \succeq h C_k^{\frac12}Q_k^2 C_k^{\frac12}\,.\]
    Under the assumption that $C_k$ is invertible, we require $I-hQ_k\succeq 0$ that produces the sufficient condition $h\le \frac{1}{\lambda_{\max}(Q_k)}$, where $\lambda_{\max}(Q_k)$ denotes the largest eigenvalue of $Q_k$. Since $\lambda_{\max}(C_k)\le E_k\le E_0$ and $\lambda_{\max}(S)\le L$, our assumption $h\le \frac{1}{LE_0}$ is sufficient to guarantee \eqref{eq:suffcond} as long as $C_k$ is invertible. Hence, provided that $\lambda_{\min}(C_k)>0$ it holds
    \begin{equation} \label{eq:lowerbound_cov1}
    C_{k+1} \succeq C_k - h C_kSC_k \succeq C_k - h L C_k^2\,.
    \end{equation}
    Diagonalize $C_k = U_k D_k U_k^\top$ with $D_k = {\mathrm{diag}}_{i=1,\dots,d}(\lambda_k^{(i)})$ where $U_k$ are orthogonal matrices and $\lambda_k^{(i)}\ge \lambda_{\min}(C_k)>0$, $i=1,\dots,d$ are the eigenvalues of $C_k$. Now \eqref{eq:lowerbound_cov1} reads as
    \begin{equation} \label{eq:lowerbound_cov2}
    C_{k+1} \succeq U_k{\mathrm{diag}}_{i=1,\dots,d}\big(\lambda_k^{(i)}-hL(\lambda_k^{(i)})^2 \big)U_k^\top\,.
    \end{equation}
    By assumption $h\le \frac{1}{2LE_0}$ such that $\lambda_{\max}(C_k)\le \lambda_{\max}(C_0) \le \frac{1}{2Lh}$ and, hence, 
    $\lambda_k^{(i)}\in[0,\frac{1}{2hL}]$ for all $i=1,\dots,d$. Since $g(\lambda) := \lambda - hL\lambda^2$ is decreasing on $[0,\frac{1}{2Lh}]$, we have 
    \[\min_{i=1,\dots,d} \lambda_k^{(i)}-hL(\lambda_k^{(i)})^2 = \lambda_{\min}(C_k) - hL \lambda_{\min}^2(C_k)\,.\]
    This shows
    \[ q_{k+1}:=\lambda_{\min}(C_{k+1})\ge \min_{i=1,\dots,d} \lambda_k^{(i)}-hL(\lambda_k^{(i)})^2 \ge \lambda_{\min}(C_k)-hL\lambda_{\min}^2(C_k) = q_k-hLq_k^2\,.\]
    By assumption $h\le \frac{1}{2LE_0}$, such that $hLq_k\le hL E_0\le \frac{1}{2}$ for all $k\in\N$ implying that
    \[ q_{k+1} \ge \frac{1}{2}q_k \ge \frac{1}{2^{k+1}} q_0 \ge \frac{1}{2^{k+1}}\sigma_0 >0\,.\]
    We can now deduce that
    \[ \frac{1}{q_{k+1}}\le \frac{1}{q_k}\frac1{1-hLq_k} \le \frac{1}{q_k}(1+2hLq_k) \le \frac{1}{q_k} + 2hL \le \dots \le \frac{1}{q_0} + 2hL(k+1)\]
    where we used $(1-u)^{-1}\le 1+2u$ for all $u\in(0,1/2]$ with $hLq_k\le 1/2$. This verifies the lower bound
    \[\lambda_{\min}(C_k)\ge \frac{1}{2hL k + \sigma_0^{-1}} \ge \frac{1}{(\sigma_0^{-1} + 2hL)(k+1)} = \frac{\sigma_l}{k+1}\]
    with $\sigma_l := (\sigma_0^{-1} + 2hL)^{-1} = \frac{\sigma_0}{1+2hL\sigma_0}$ and concludes the proof.
\end{proof}

\begin{lemma}\label{lem:cov_discretetime_cauchy}
    Under the setting of \Cref{lem:cov_discretetime}, it holds true that
\[\|C_{k+1}-C_{k}\|_{\cF}\le \Big(\frac{2LJ^2}{h\mu^2}+\frac{L^2 J^3}{h\mu^3}\Big)\frac{1}{(t+1)^2}\,. \]
\end{lemma}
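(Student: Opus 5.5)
Our plan is to work directly from the covariance recursion \eqref{eq:samplecov}, which gives the exact increment
\[
C_{k+1}-C_k = -2h\,C_kSC_k + h^2\,C_kSC_kSC_k .
\]
The right-hand side no longer involves the subgradient selection $g_{\bar x_k}$, so the regularization plays no role. The estimate reduces to bounding two polynomial expressions in $C_k$ using the spread bounds of \Cref{lem:cov_discretetime}. We read the $t$ in the statement as the iteration index $k$.

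For the first term we use submultiplicativity of the Frobenius norm together with $\|S\|\le L$ from \Cref{ass:smoothloss}. This gives $\|C_kSC_k\|_{\cF}\le L\|C_k\|_{\cF}^2$. Then $\|C_k\|_{\cF}\le E_k$, from the chain of inequalities preceding \Cref{lem:cov_discretetime}, combined with \eqref{eq:upperboundensemblespread}, $E_k\le \sigma_u/(k+1)$ with $\sigma_u=J/(h\mu)$, yields
\[
2h\|C_kSC_k\|_{\cF}\le 2hL\frac{J^2}{h^2\mu^2(k+1)^2}=\frac{2LJ^2}{h\mu^2}\frac{1}{(k+1)^2}.
\]
This is exactly the first constant in the claim.

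For the second term we bound $\|C_kSC_kSC_k\|_{\cF}\le L^2\|C_k\|_{\cF}^3\le L^2E_k^3$. We spend two of the three factors of $E_k$ on the decay $\sigma_u^2/(k+1)^2$. The remaining factor we keep as $E_k\le E_0$ rather than turning it into a third power of $1/(k+1)$. This gives $h^2L^2E_0\,J^2/(h^2\mu^2(k+1)^2)=L^2E_0J^2/(\mu^2(k+1)^2)$. The step-size restriction $h\le J/(\mu E_0)$ from \Cref{lem:cov_discretetime} implies $E_0\le J/(h\mu)$. Substituting, we obtain $\frac{L^2J^3}{h\mu^3}\frac{1}{(k+1)^2}$, which is the second constant. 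The triangle inequality then finishes the argument.

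No step is genuinely hard. The only point needing care is this bookkeeping in the quadratic-in-$h$ term: one must choose which factor of $E_k$ goes to $E_0$ and then use the step-size constraint, rather than the cruder $(k+1)^{-3}$ bound, to recover exactly the stated constant with its $h^{-1}$ dependence.
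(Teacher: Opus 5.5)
Your proof is correct and follows the paper's argument: bound the increment from \eqref{eq:samplecov} by $2hLE_k^2+h^2L^2E_k^3$ using $\|C_k\|_{\cF}\le E_k$, then insert $E_k\le\sigma_u/(k+1)$. The one difference is the cubic term, where the paper uses $E_k^3\le\sigma_u^3/(k+1)^3\le\sigma_u^3/(k+1)^2$. Since $h^2L^2\sigma_u^3=L^2J^3/(h\mu^3)$ exactly, that cruder bound already yields the stated constant, so your detour through $E_0\le J/(h\mu)$ is valid but unnecessary, contrary to your closing remark.
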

\begin{proof}
    Using the evolution \eqref{eq:samplecov} we find that
    \begin{align*}
        \|C_{k+1}-C_k\|_{\cF} \le 2hL E_k^2 + h^2 L^2 E_k^3 \le \Big(\frac{2LJ^2}{h\mu^2}+\frac{L^2 J^3}{h\mu^3}\Big)\frac{1}{(t+1)^2}
    \end{align*}
    where we used the upper bound on $E_k$ derived in \eqref{eq:upperboundensemblespread}.
\end{proof}

\subsection{Convergence analysis}
We are now ready to formulate the main result of this section. While well-posedness has been established in the continuous-time setting, the convergence towards the minimizer of of $\Phi_R$ is verified in the discrete-time setting. The proof follows the general outline of convergence proofs for sub-gradient descent in the strongly convex regime. However, preconditioning through $C_k$ involves multiple challenges that can be solved through covariance control from \Cref{lem:cov_discretetime,lem:cov_discretetime_cauchy}. 
\begin{theorem}\label{thm:main_convergence_discr}
    Suppose that \Cref{ass:regularization} and \Cref{ass:smoothloss} are in place and assume that subgradients of $R$ are uniformly bounded, i.e., there exists $M>0$ such that
    \[
    \|g_x\| \le M
    \quad \text{for all } g_x \in \partial R(x), \; x\in\R^d.
    \]
    Let $(x_k^{(j)},\, k\in\mathbb{N})_{j=1,\dots,J}$ be generated by \eqref{eq:SEKI_discrete} with
    initial ensemble $(x_0^{(j)})_{j=1,\dots,J}$, such that $C_0\succ \sigma_0 I$ for some $\sigma_l>0$, and
    let $h\le \frac{\mu}{4L^2\lambda_{\max}(C_0)}$. Then there exists $k_0\ge0$ such that for $K\ge k_0$
    \[ \frac{1}{K-k_0} \sum_{k=k_0}^K \Big(2h(\Phi_R(\bar x_k)-\Phi_R(x_\ast)) + \frac{\mu^2}{16L^2}\|\bar x_k-x_\ast\|^2\Big) \in \cO\Big(\frac{\log((K+1)/(k_0+1))}{K-k_0}\Big)\,,\]
    where $x_\ast = \arg\min_{x\in\R^d} \, \Phi_R(x)$.
\end{theorem}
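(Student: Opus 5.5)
We will work with the mean recursion and a time-varying Lyapunov function weighted by the inverse covariance. Averaging \eqref{eq:SEKI_discrete} over $j$ gives
\[
\bar x_{k+1} = \bar x_k - hC_k\big(\nabla\Phi(\bar x_k) + g_{\bar x_k}\big), \qquad \nabla\Phi(x)=S x - A^\top\Gamma^{-1}y,
\]
which is a preconditioned subgradient step for $\Phi_R$ at the mean. (If the Tikhonov part is present, fold it into $S$.) The quantity to track is $V_k:=\|\bar x_k-x_\ast\|_{C_k}^2=(\bar x_k-x_\ast)^\top C_k^{-1}(\bar x_k-x_\ast)$. This is well defined because \Cref{lem:cov_discretetime} gives $C_k\succeq\frac{\sigma_l}{k+1}I\succ0$; the step-size hypothesis should imply the one there once $E_0$ is compared with $\lambda_{\max}(C_0)$, possibly after enlarging constants or choosing $k_0$.

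\textbf{Step 1: one-step expansion.} Write $d_k=\bar x_k-x_\ast$ and $v_k=\nabla\Phi(\bar x_k)+g_{\bar x_k}$. Expanding in the $C_k^{-1}$-metric gives
\[
\|d_{k+1}\|_{C_k}^2=\|d_k\|_{C_k}^2-2h\langle d_k,v_k\rangle+h^2\langle v_k,C_kv_k\rangle .
\]
Convexity of $R$ together with $\mu$-strong convexity of $\Phi$ gives
\[
\langle d_k,v_k\rangle\ge\Phi_R(\bar x_k)-\Phi_R(x_\ast)+\tfrac\mu2\|d_k\|^2 .
\]
For the last term, use $\|v_k\|^2\le 2L^2\|d_k\|^2+2\|\nabla\Phi(x_\ast)+g_{\bar x_k}\|^2$. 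Here $\nabla\Phi(x_\ast)=-u_\ast$ with $u_\ast\in\partial R(x_\ast)$, so the second piece is at most $8M^2$. We also have $\lambda_{\max}(C_k)\le\lambda_{\max}(C_0)$, since $C_{k+1}\preceq C_k$ by \eqref{eq:samplecov} and \eqref{eq:suffcond}. Hence, using $h\le\mu/(4L^2\lambda_{\max}(C_0))$,
\[
h^2\langle v_k,C_kv_k\rangle\le\tfrac{h\mu}{2}\|d_k\|^2+16h^2M^2\lambda_{\max}(C_k),
\]
so that half of the strong convexity term survives. Using the upper bound of \Cref{lem:cov_discretetime}, the last term is $\cO(1/(k+1))$.

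\textbf{Step 2: changing the metric (main obstacle).} We must pass from $\|d_{k+1}\|_{C_k}^2$ to $V_{k+1}=\|d_{k+1}\|_{C_{k+1}}^2$, and $C_{k+1}^{-1}-C_k^{-1}\succeq0$ grows the norm. From \eqref{eq:samplecov}, $C_{k+1}=C_k^{1/2}(I-hQ_k)^2C_k^{1/2}$ with $Q_k=C_k^{1/2}SC_k^{1/2}$. Hence
\[
C_{k+1}^{-1}=C_k^{-1/2}(I-hQ_k)^{-2}C_k^{-1/2}\preceq C_k^{-1}+c\,hS
\]
whenever $hQ_k\preceq\frac12I$. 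This gives
\[
V_{k+1}\le\|d_{k+1}\|_{C_k}^2+chL\|d_{k+1}\|^2 .
\]
This is exactly the continuous-time cancellation seen in \Cref{prop:uniqueness}, now with an error term. The added $chL\|d_{k+1}\|^2$ must be absorbed by the surviving $\frac{h\mu}{2}\|d_k\|^2$. That is not immediate when $L\gg\mu$, and I expect the bookkeeping of constants to be the delicate point.

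First attempt: use $\|d_{k+1}\|\le(1+h\lambda_{\max}(C_k)L)\|d_k\|+hM'\lambda_{\max}(C_k)$. Choose $k_0$ so large that $\lambda_{\max}(C_k)\le\sigma_u/(k+1)$ is small. The increase $h(C_{k+1}^{-1}-C_k^{-1})$ is then controlled through \Cref{lem:cov_discretetime_cauchy} together with the lower bound $\lambda_{\min}(C_k)\ge\sigma_l/(k+1)$, yielding a residual $\tfrac{\mu^2}{16L^2}\|d_k\|^2$ lower-order weight. This is where the factor $\mu^2/(16L^2)$ in the statement should come from.

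\textbf{Step 3: telescoping.} Combining the previous steps gives, for $k\ge k_0$,
\[
V_{k+1}\le V_k-\Big(2h(\Phi_R(\bar x_k)-\Phi_R(x_\ast))+\tfrac{\mu^2}{16L^2}\|d_k\|^2\Big)+\frac{c'}{k+1}.
\]
Summing from $k_0$ to $K$ and dropping $V_{K+1}\ge0$, the left-hand sum is bounded by
\[
V_{k_0}+c'\sum_{k=k_0}^K\frac{1}{k+1}\le V_{k_0}+c'\log\frac{K+1}{k_0+1}+c'' .
\]
Dividing by $K-k_0$ gives the claimed $\cO(\log((K+1)/(k_0+1))/(K-k_0))$ rate, with $V_{k_0}$ finite by the covariance lower bound.
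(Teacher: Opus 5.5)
Your proposal follows the paper's route. That route uses a $C^{-1}$-weighted distance to $x_\ast$, the one-step expansion with strong convexity, \Cref{lem:cov_discretetime_cauchy} plus the lower covariance bound for the change of metric, and harmonic-sum telescoping; the paper only shifts the index, using $\|\bar x_k-x_\ast\|_{C_{k-1}}^2$. Step~1 and the telescoping are fine, but Step~2, which you correctly single out, cannot be closed.

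The increment of the inverse covariance never becomes small. From your own identity $C_{k+1}^{-1}=C_k^{-1/2}(I-hQ_k)^{-2}C_k^{-1/2}$ and $(1-x)^{-2}\ge 1+2x$ on $[0,1)$, you get $C_{k+1}^{-1}-C_k^{-1}\succeq 2hS\succeq 2h\mu I$ for every $k$. So the change of metric costs at least $2h\langle d_{k+1},Sd_{k+1}\rangle\ge 2h\mu\|d_{k+1}\|^2$. This is roughly four times the $\frac{h\mu}{2}\|d_k\|^2$ left after Step~1, for all $k$ and independently of $L/\mu$. \Cref{lem:cov_discretetime_cauchy} together with $\lambda_{\min}(C_k)\ge\sigma_l/(k+1)$ only gives $\|C_{k+1}^{-1}-C_k^{-1}\|\le\|C_{k+1}^{-1}\|\,\|C_{k+1}-C_k\|\,\|C_k^{-1}\|=\cO(k^2)\cdot\cO(k^{-2})=\cO(1)$. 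Hence no choice of $k_0$ leaves a residual such as $-\frac{\mu^2}{16L^2}\|d_k\|^2$.

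A concrete check is $R\equiv0$, which is admissible with any $M>0$. Then $d_{k+1}=(I-hC_kS)d_k$ and $C_{k+1}=(I-hC_kS)C_k(I-hC_kS)^\top$, so $V_{k+1}=V_k$ exactly. Your Step~3 would instead make $V_k$ drop by $2h(\Phi(\bar x_k)-\Phi(x_\ast))+\frac{\mu^2}{16L^2}\|d_k\|^2>0$ per step, up to an $M$-dependent slack that can be taken arbitrarily small. The paper's written proof has the same step: it asserts that \Cref{lem:cov_discretetime_cauchy} yields $k_0$ with $\|C_k^{-1}-C_{k-1}^{-1}\|\le\frac{\mu^2}{16L^2}$ and absorbs this into $-h\mu\|\bar x_k-x_\ast\|^2$. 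Following the paper will therefore not close the gap.

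The underlying reason is the discrete counterpart of the exact cancellation in \Cref{prop:uniqueness}. Write $\langle d_k,v_k\rangle=\langle d_k,Sd_k\rangle+\langle d_k,g_{\bar x_k}-g_\ast\rangle$ with $g_\ast=-\nabla\Phi(x_\ast)\in\partial R(x_\ast)$. The quadratic part is consumed, to leading order, by the growth $2hS$ of $C_k^{-1}$, and only the monotonicity term of $\partial R$ survives as a genuine decrease.

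What your setup does give comes from the two-sided bound $2hS\preceq C_{k+1}^{-1}-C_k^{-1}\preceq 2hS+c\,h^2SC_kS$. This is valid since $h\lambda_{\max}(C_k)L\le\frac14$ under the theorem's step size. It yields $V_{k+1}\le V_k-2h\langle d_k,g_{\bar x_k}-g_\ast\rangle+\cO(h^2\lambda_{\max}(C_k)(L^2\|d_k\|^2+M^2))$, hence $V_k=\cO(1+\log k)$. The decay of $\|\bar x_k-x_\ast\|^2\le\lambda_{\max}(C_k)V_k=\cO(\log k/k)$ then comes from ensemble collapse, not from a strong-convexity surplus. Summing, and using $\Phi_R(\bar x_k)-\Phi_R(x_\ast)\le\langle d_k,Sd_k\rangle+\langle d_k,g_{\bar x_k}-g_\ast\rangle$, bounds the averaged quantity in the theorem. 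Along this route the bound is only $\cO(\log^2K/K)$, i.e.\ an extra logarithmic factor.

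The same two-sided bound also gives $\Theta(1/k)$ eigenvalue bounds on $C_k$ directly under the theorem's step-size condition. That settles your concern about matching the hypothesis of \Cref{lem:cov_discretetime}.
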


\begin{proof}
    Recall that $\Phi(x):=\frac12\|Ax-y\|^2,\ x\in\R^d$ and consider
    \begin{align*} 
    \bar x_{k+1} = \bar x_k - h C_k A^\top \Gamma^{-1} (A\bar x_k-y) - hC_k g_{\bar x_k} = \bar x_k - h C_k \nabla \Phi(x_k) - h C_k g_{\bar x_k}\,.
    \end{align*}
    Since $\Phi$ is strongly convex, there exists a unique global minimizer $x_\ast = \arg\min_{x\in\R^d} \, \Phi_R(x)$. Consider the evolution of the error $r_k^{(1)} = \|\bar x_k - x_\ast\|$ which satisfies
    \[ r_{k+1}^{(1)} = \|\bar x_{k+1} - x_\ast\|^2 = \|x_k-x_\ast\|^2 - 2h \langle C_k (\nabla \Phi(\bar x_k)+ g_{\bar x_k}), \bar x_k-x_\ast\rangle + h^2 \|C_k(\nabla \Phi(\bar x_k) +g_{\bar x_k})\|^2\,.\]
    Note that $C_k$ is invertible for all $k\ge0$ by \Cref{lem:cov_discretetime}, so that we can compute
    \begin{align*}
        \|\bar x_{k+1} - x_\ast\|_{C_k}^2 &= \|\bar x_k-x_\ast\|_{C_k}^2 - 2h\langle C^{-1}(x_k) C_k (\nabla \Phi(\bar x_k)+g_{\bar x_k}), \bar x_k-x_\ast\rangle\\
        &\quad + h^2 \|C^{-1/2}(x_k) C_k (\nabla \Phi(\bar x_k)+g_{\bar x_k})\|^2\\
        &= \|\bar x_k-x_\ast\|_{C_k}^2 - 2 h\langle \nabla \Phi(\bar x_k)+g_{\bar x_k},\bar x_k-x_\ast\rangle + h^2 \|C^{1/2}(x_k) (\nabla \Phi(\bar x_k)+g_{\bar x_k})\|^2\\
        &\le \|\bar x_k-x_\ast\|_{C_k}^2 - 2h \big(\Phi(\bar x_k)+R(\bar x_k) -(\Phi(x_\ast)+R(x_\ast)\big) - h\mu\|\bar x_k-x_\ast\|^2\\
        & \quad + h^2 \lambda_{\max}(C(x_k)) \|\nabla \Phi(\bar x_k)+ g_{\bar x_k}- \nabla \Phi(x_\ast)- g_{x_\ast}\|^2\,,
    \end{align*}
    where we used 
    \[\Phi(z)+R(z) \ge \Phi(x) + R(x) + \langle z-x,\nabla \Phi(x)+g_x\rangle + \frac{\mu}2 \|x-z\|^2\] 
    for all $x,z\in\R^d$ and all $g_x\in\partial R(x)$ due to convexity. Moreover, we used the optimality condition $0 \in \nabla \Phi(x_\ast)- \partial R(x_\ast)$. Rewrite the last line and compute
    \begin{align*}
        \|\bar x_{k+1} - x_\ast\|_{C_k}^2 &\le \|\bar x_k-x_\ast\|_{C_k}^2 - 2h (\Phi_R(\bar x_k)-\Phi_R(x_\ast)) - h\mu \|\bar x_k-x_\ast\|^2\\
        &\quad+  2h^2\lambda_{\max}(C_k) \|\nabla \Phi(\bar x_k)-\nabla \Phi(\bar x_\ast)\|^2 + 2h^2 \lambda_{\max}(C_k) \|g_{\bar x_k}-g_{\bar x_\ast}\|^2\\
        &\le\|\bar x_k-x_\ast\|_{C_k}^2 - 2h (\Phi_R(\bar x_k)-\Phi_R(x_\ast))+ (2h^2 \lambda_{\max}(C_k) L^2 - h \mu)\|\bar x_k-x_\ast\|^2\\ &\quad + 2h^2 \lambda_{\max}(C_k) M^2\,,
    \end{align*}
    where we used the $L$-Lipschitz continuity of $\nabla \Phi$ and the boundedness of the subgradients $g_x$ for all $g_x\in\partial R(x)$ and all $x\in\R^d$. Since
    \begin{align*} 
    \|\bar x_k-x_\ast \|_{C_k}^2  &= \|\bar x_k-x_\ast\|_{C_{k-1}} + \langle \bar x_k-x_\ast , (C^{-1}_k-C^{-1}_{k-1})(\bar x_k-x_\ast)\rangle\\
    &\le \|\bar x_k-x_\ast\|_{C_{k-1}} + \|C^{-1}_k-C^{-1}_{k-1}\| \|\bar x_k-x_\ast\|^2
    \end{align*}
    it holds 
     \begin{align*}
        \|\bar x_{k+1} - x_\ast\|_{C_k}^2 &\le \|\bar x_k-x_\ast\|_{C_{k-1}}^2 - 2h (f(\bar x_k)-f(x_\ast))+ 2h^2 \lambda_{\max}(C_k) M^2\\ &\quad+ (2h^2 \lambda_{\max}(C_k) L^2 + \|C^{-1}_k-C^{-1}_{k-1}\| - h \mu)\|\bar x_k-x_\ast\|^2\,. 
    \end{align*}

    In the remainder of the proof, assume that the step size satisfies $h\le \frac{\mu}{4L^2\lambda_{\max}(C_0)}$ such that $2h(-\mu + 2 h \lambda_{\max}(C_k))\le -\frac{\mu^2}{8L^2}$. By \Cref{lem:cov_discretetime_cauchy}, there exists $k_0\ge 0$ such that $\|C^{-1}_k-C^{-1}_{k-1}\| \le \frac{\mu^2}{16L^2}$ for all $k\ge k_0$ implying 
    \[ \|\bar x_{k+1} - x_\ast\|_{C_k}^2 \le \|\bar x_k-x_\ast\|_{C_{k-1}}^2- \frac{\mu^2}{16L^2}\|\bar x_k-x_\ast\|^2 - 2h (\Phi_R(\bar x_k)-\Phi_R(x_\ast))+ 2h^2 \lambda_{\max}(C_k) M^2 \,.\]
    for all $k\ge k_0$. Rearranging this inequality and taking a summation from $k=k_0,\dots, K$ yields
    \begin{align*}
        \frac{1}{K-k_0} &\sum_{k=k_0}^K \Big(2h(\Phi_R(\bar x_k)-\Phi_R(x_\ast)) + \frac{\mu^2}{16L^2}\|\bar x_k-x_\ast\|^2\Big) \\ 
        &\le \frac{\sum_{k=k_0}^K \|\bar x_{k}-x_\ast\|_{C(x_{k-1})}^2 - \|\bar x_{k+1}-x_\ast \|_{C_k}^2}{K-k_0} + \frac{2h^2 M^2\sum_{k=k_0}^K \lambda_{\max}(C_k)}{K-k_0} \\
        &\le \frac{\|x_{k_0}-x_\ast\|_{C(x_{k_0-1})}^2}{K-k_0} +  {\frac{2hM^2J}{\mu}}\frac{2+\log((K+1)/(k_0+1))}{K-k_0}\,, 
    \end{align*}
    where we have used 
    \[ \sum_{k=k_0}^K \lambda_{\max}(C_k)\le \frac{J}{h\mu} \sum_{k=k_0}^K\frac{1}{k+1}  \le {\frac{J}{h\mu}} \Big(\frac{1}{k_0+1}+\int_{k_0}^{K}\frac{1}{{x+1}}\,\dd x \Big)\le {\frac{J}{h\mu}}  \Big(2+\log\big((K+1)/(k_0+1)\big)\Big)\,\]
    by \Cref{lem:cov_discretetime}. 
\end{proof}

\section{Numerical Experiments}\label{sec:numerics}
The goal of the numerical experiments is not to advocate EKI as a superior optimizer compared with specialized proximal or primal--dual methods. Instead, we aim to show that non-smooth regularization can be incorporated into ensemble Kalman inversion in a stable and principled way through the proposed subgradient framework. We compare the proposed method primarily with a baseline subgradient descent method in order to isolate the effect of the covariance-based preconditioning.

We begin by introducing a practical hybrid implementation of \eqref{eq:SEKI_discrete} in which the empirical covariance is frozen after a burn-in phase. We then study the method on two representative examples: computed tomography with total variation regularization and compressed sensing with $\ell_1$-regularization.

\subsection{Hybrid SEKI with covariance freezing}
To better understand the role of covariance-based preconditioning and to enable a fair comparison of computational costs, we consider a hybrid implementation of SEKI. The strategy consists of an initial \emph{burn-in phase} (Phase I) during which the full SEKI dynamics are evolved to learn a problem-adapted preconditioner from the ensemble. Subsequently, in the \emph{freezing phase} (Phase II), the empirical covariance is fixed, and only the ensemble mean is updated using the resulting static preconditioner.

This hybrid approach effectively transitions from a particle-based ensemble method to a preconditioned subgradient iteration. Crucially, the scheme remains adjoint-free throughout both phases, as all required quantities are derived solely from forward model evaluations and ensemble averages. By freezing the covariance, the per-iteration cost in Phase II is reduced from $J$ forward evaluations to a single evaluation at the mean iterate. The hybrid scheme therefore provides a convenient way to isolate the effect of the covariance-based preconditioning learned by SEKI while substantially reducing the computational cost allowing for the use of large ensemble sizes. 
We summarize this strategy in \Cref{alg:frozen_seki}. 

Note that Phase II requires the specification of a sequence of step sizes $(h_k)_{k \in \mathbb{N}_0}$. We typically employ a diminishing step-size rule, e.g., $h_k \sim (k+1)^{-p}$ for $p \in (0, 1]$, to reflect the decaying update magnitudes observed during the intrinsic ensemble collapse of standard SEKI. For the case $p=1$, the convergence results under strong convexity (\Cref{thm:main_convergence_discr}) extend naturally to this hybrid setting.

\begin{algorithm}[htb!]
\caption{Hybrid SEKI with covariance freezing}
\label{alg:frozen_seki}
\begin{algorithmic}[1]

\Require Ensemble $\{x^{(j)}_0\}_{j=1}^J\subset \R^d$, step sizes $\{h_k\}_{k\ge0}$, burn-in length $k_{\mathrm b}\in\N$, total length $K\ge k_{\mathrm b}$, data $y$, (augmented) forward operator $A$, covariance $\Gamma$, regularizer $R$
\Ensure Hybrid iterates $\bar x_k$

\State \textbf{Phase I (SEKI burn-in phase):}
\For{$k=0,\dots,k_{\mathrm b}-1$}

\State compute mean 
$
\bar x_k=\frac1J\sum_{j=1}^J x^{(j)}_k
$;
\State compute covariances
$
C_k=\frac1J\sum_{j=1}^J (x^{(j)}_k-\bar x_k)(x^{(j)}_k-\bar x_k)^\top\,,$\\ \hspace{4cm} $
C_k^{x,A}=\frac1J\sum_{j=1}^J (x^{(j)}_k-\bar x_k)(Ax^{(j)}_k-A\bar x_k)^\top\, ;
$
\State pick $\bar g_k\in\partial R(\bar x_k)$;

\State update particles \quad 
$
x^{(j)}_{k+1}
= x^{(j)}_{k}
- h_k\, C_k^{x,A}\, \Gamma^{-1}(A x^{(j)}_{k}-y)
- h_k\, C_k\, \bar g_k
$,\quad $j=1,\dots,J$;

\EndFor

\Statex

\State \textbf{Freeze preconditioner:}
store $\widehat C := C_{k_{\mathrm b}}$ and $\widehat C^{x,A} := C_{k_{\mathrm b}}^{x,A}
$;

\Statex

\State \textbf{Phase II (covariance freezing phase):}

\State initialize
$
\bar x_{k_{\mathrm b}}=\frac1J\sum_{j=1}^J x^{(j)}_{k_{\mathrm b}}
$\,

\For{$k=k_{\mathrm b},\dots,K-1$}

\State pick $\bar g_k\in\partial R(\bar x_k)$;

\State update mean
$
\bar x_{k+1}
= \bar x_k
- h_k\, \widehat C^{x,A}\, \Gamma^{-1}(A\bar x_k-y)
- h_k\, \widehat C\, \bar g_k
$.

\EndFor

\end{algorithmic}
\end{algorithm}

\subsection{Image reconstruction based on the radon transform}
\label{sec:num_radon}
We consider a computed tomography inverse problem based on the Radon transform \cite{Hansen2021}. 
The unknown $x^\dagger\in\R^d$ represents a two-dimensional image obtained by vectorizing an $n\times n$ phantom ($d=n^2$). 
The forward operator $A\in\R^{K\times d}$ is a discrete Radon transform that maps an image to a collection of line integrals corresponding to prescribed projection angles and detector bins.

Given $x^\dagger$, we generate noisy observations according to
\[
y = A x^\dagger + \eta, \qquad \eta\sim \mathcal N(0,\sigma^2 I),\quad \sigma=0.01.
\]

To reconstruct the image, we solve the variational problem
\begin{equation}\label{eq:Opt_Radon}
\min_{x\in\R^d}\ \Phi_R(x),\quad \Phi_R(x):= \Phi(x)+R_{\mathrm{Tik}}(x)+R_{\mathrm{TV}}(x).
\end{equation}
Here
\[
\Phi(x):=\frac{1}{2\sigma^2}\|Ax-y\|^2,\qquad 
R_{\mathrm{Tik}}(x)=\frac{0.01}{2}\|x\|^2,\qquad 
R_{\mathrm{TV}}(x)=0.1\,\mathrm{TV}(x),
\]
where the total variation regularizer is defined as
\[
\mathrm{TV}(x):=\sum_{i,j}\bigl(|x_{i+1,j}-x_{i,j}|+|x_{i,j+1}-x_{i,j}|\bigr)
\]
for $x\in\R^d$ representing a discretized image of size $\sqrt{d}\times\sqrt{d}$ pixels.

\begin{figure}[!htb]
  \centering \includegraphics[width=1.0\textwidth]{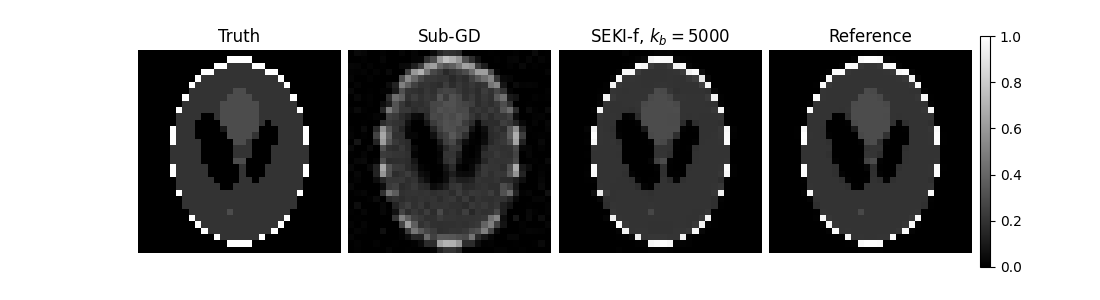}
 \caption{Reconstruction of the unknown image $x^\dagger$ using Sub-GD and SEKI-f ($k_b=5000$) after $5\cdot10^5$ iterations. From left to right: ground truth, Sub-GD reconstruction, SEKI-f reconstruction (ensemble mean), and the reference solution $x_\ast$.} \label{fig:radon_reconcomp}
\end{figure} 

\paragraph{Implementation details.} 
The experiment was implemented in \texttt{Python} using the \textit{scikit-image} library for the Radon transform. 
Our simulations compare the hybrid SEKI scheme of Algorithm~\ref{alg:frozen_seki} (SEKI-f) with step sizes
\begin{equation}\label{eq:SEKI_stepradon}
h_k =
\begin{cases}
h_0, & k=0,\dots,k_b-1,\\
\displaystyle \frac{\lambda_{\max}(C_{k_b-1})\,k_b\,h_0}{(k+1)^p}, & k=k_b,\dots,K,
\end{cases}
\end{equation}
and a baseline subgradient descent (Sub-GD) method that uses the same step-size schedule $h_k = \frac{h_0}{(k+1)^p}$ for all $k=0,\dots,K-1$:
\[
x_{k+1}=x_k - h_k\Big(A^\top \Gamma^{-1}(Ax_k-y) + g_k\Big),\qquad g_k\in\partial R(x_k).
\]
The factor $\lambda_{\max}(C_{k_b-1})k_b$ serves as an optimistic estimate of the constant $\sigma_u$ appearing in the upper bound on the sample covariance $C_k$ in Lemma~\ref{lem:cov_discretetime}. 
Since all methods use the same initial step size $h_0$ and a comparable decay rate for $h_k$, the observed differences can primarily be attributed to covariance-based preconditioning.

In our implementation we set $h_0 = 0.9\cdot \lambda_{\max}(A^\top A + 0.01\,I_d)$ and $p=1$, which is justified by the strong convexity of the objective.
SEKI-f is initialized with i.i.d.~particles $x_0^{(j)}\sim \mathcal N(0,0.1^2 I_d)$, while Sub-GD is initialized with their empirical mean
\[
\bar x_0 = \frac{1}{J}\sum_{j=1}^J x_0^{(j)}.
\]

To compare the computational cost of ensemble-based and mean-based methods, we report performance as a function of both the \emph{iteration count} and the \emph{wall-clock time}.
One SEKI iteration during the burn-in phase requires $J$ evaluations of the forward map $v\mapsto Av$, whereas SEKI-f after the burn-in phase and Sub-GD require only a single evaluation per iteration.

As performance metrics we track the relative reconstruction error $\|x_k-x_\ast\|/\|x_\ast\|$ (using the ensemble mean $\bar x_k$ for SEKI-f) and the optimality gap $\Phi_R(x_k)-\Phi_R(x_\ast)$. 
Here, $x^\ast$ denotes an approximate minimizer of \eqref{eq:Opt_Radon} obtained from a substantially longer run of Sub-GD. 
Further implementation details are summarized in \Cref{tab:radon_params} of \Cref{app:num_radon}.

\begin{figure}[!htb]
  \centering \includegraphics[width=0.45\textwidth]{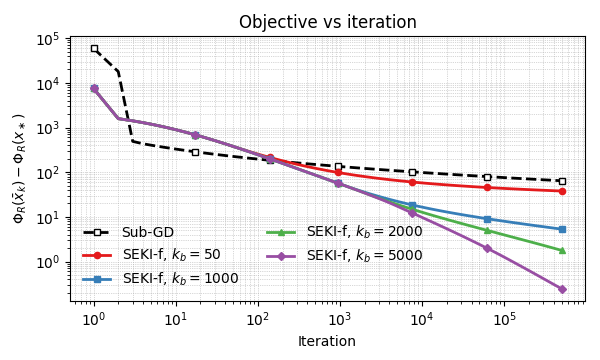}~~\includegraphics[width=0.45\textwidth]{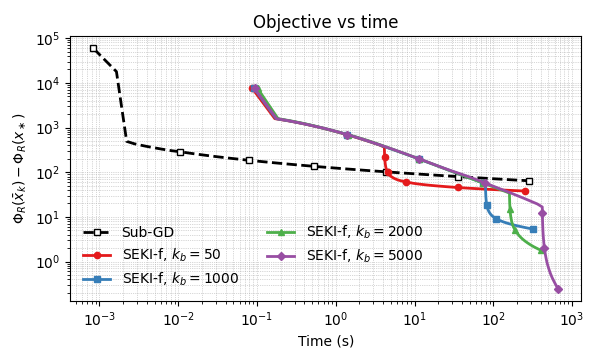}
 \caption{Objective gap $\Phi_R(x_k)-\Phi_R(x_\ast)$ for Sub-GD and SEKI-f with burn-in lengths $k_b\in\{50,1000,2000,5000\}$. Left: convergence versus iteration. Right: convergence versus computational time.} \label{fig:radon_loss}
\end{figure} 

\begin{figure}[!htb]
  \centering \includegraphics[width=0.45\textwidth]{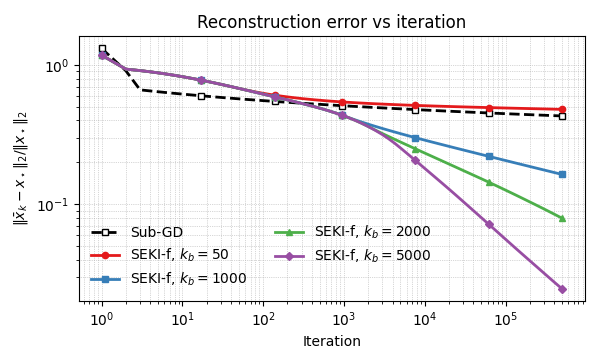}~~\includegraphics[width=0.45\textwidth]{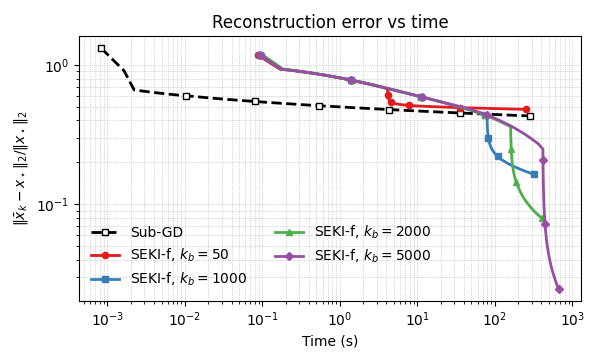}
 \caption{Relative reconstruction error $\|\bar x_k-x_\ast\|/\|x_\ast\|$ for Sub-GD and SEKI-f with burn-in lengths $k_b\in\{50,1000,2000,5000\}$. Left: error versus iteration. Right: error versus computational time.} \label{fig:radon_recon}
\end{figure}

\paragraph{Results.} 
The results show that both the objective gap (\Cref{fig:radon_loss}) and the reconstruction error (\Cref{fig:radon_recon}) decrease over time. 
In terms of iteration complexity, SEKI-f exhibits faster convergence than Sub-GD. 
This behavior is also visible in the right panels of the figures, where performance is plotted against computational time.
In \Cref{fig:radon_reconcomp} we additionally display the reconstructions obtained after $5\cdot10^5$ iterations. Since Sub-GD and SEKI-f use similar step-size schedules, the improved performance of SEKI-f can be attributed to the covariance-based preconditioning.
This effect becomes more pronounced when comparing different burn-in lengths $k_b\in\{50,1000,2000,5000\}$. 
While larger values of $k_b$ increase the computational cost of the burn-in phase, they lead to substantially better performance in the frozen phase, as illustrated in \Cref{fig:radon_loss} and \Cref{fig:radon_recon}.

\subsection{Compressed sensing with correlated sensing matrices}
\label{sec:num_cs}
As a second example, we consider the recovery of a sparse signal in a linear compressed sensing model. 
This setup allows us to investigate the effect of $\ell_1$-regularization for sparse recovery.

Consider the linear observation model
\begin{equation}\label{eq:IP_cs}
y = A x^\dagger + \eta,
\qquad
\eta \sim \mathcal N(0,\sigma^2 I),\quad \sigma=0.02,
\end{equation}
where $A\in\mathbb R^{K\times d}$ is a sensing matrix and $x^\dagger\in\mathbb R^d$ is a sparse signal. 
To introduce correlations and control the conditioning of the problem, we generate $A$ as
\[
A = B \Sigma^{1/2},
\]
where $B\in\mathbb R^{K\times d}$ has entries drawn independently from 
$\mathcal N(0,1/K)$ and $\Sigma\in\mathbb R^{d\times d}$ is a correlation matrix with entries
\[
\Sigma_{ij} = \rho^{|i-j|}, \qquad i,j=1,\dots,d.
\]
The parameter $\rho\in[0,1)$ controls the correlation between the columns of $A$. 
For $\rho=0$, the sensing matrix reduces to the standard Gaussian design, while larger values of $\rho$ lead to increasingly correlated columns and a more ill-conditioned forward operator. This construction allows us to systematically vary the conditioning of the inverse problem through the parameter $\rho$.

The sparse ground truth $x^\dagger$ is generated by selecting a support set 
$S\subset\{1,\dots,d\}$ of size $s$ uniformly at random and setting
\[
(x^\dagger)_i =
\begin{cases}
a_i, & i\in S,\\
0, & i\notin S,
\end{cases}
\]
where the amplitudes $a_i$ are drawn independently from $\mathcal N(0,1)$. 
The observations $y$ are then generated according to the model \eqref{eq:IP_cs}.

To recover the sparse signal, we solve the $\ell_1$-regularized least-squares problem
\[
\min_{x\in\mathbb R^d}
\; \Phi_R(x),\qquad 
\Phi_R(x):=
\Phi(x) + R(x)
=
\frac12\|Ax-y\|^2 + \alpha \|x\|_1,
\]
with $\alpha=0.1$. 
The subgradients of the $\ell_1$-norm are chosen component-wise as $g_x\in\partial\|x\|_1$ with
\[
(g_x)_i =
\begin{cases}
\operatorname{sign}(x_i), & x_i\neq 0,\\
0, & x_i=0 .
\end{cases}
\] 

\begin{figure}[!htb]
  \centering \includegraphics[width=0.85\textwidth]{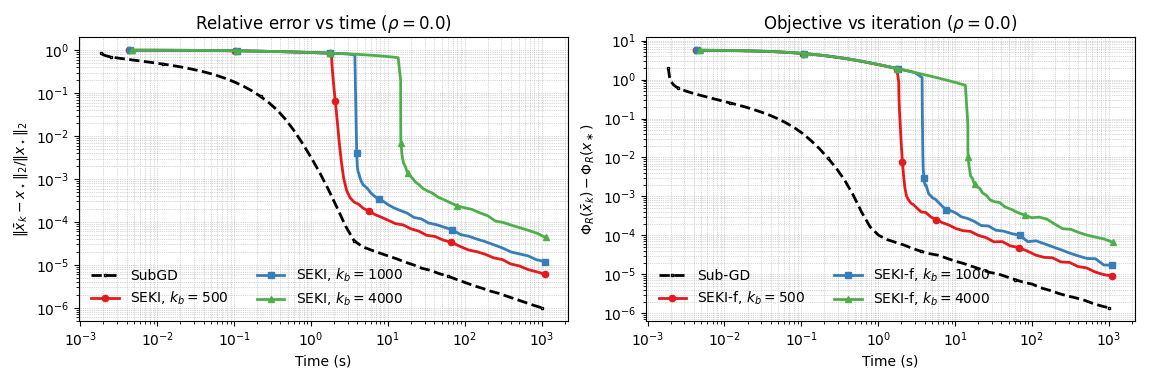}
 \caption{Relative reconstruction error (left) and objective gap (right) for the compressed sensing experiment with correlation parameter $\rho=0$. Results are shown as a function of computational time for Sub-GD and SEKI-f with burn-in lengths $k_b\in\{500,2000,4000\}$. } \label{fig:rho1}
\end{figure}

\paragraph{Implementation details.}
Our simulations again compare the proposed hybrid SEKI method from Algorithm~\ref{alg:frozen_seki} (SEKI-f) with subgradient descent (Sub-GD). 
SEKI-f is initialized with i.i.d.~particles $x_0^{(j)}\sim \mathcal N(0,0.1^2\,I_d)$, while Sub-GD is initialized with their empirical mean $\bar x_0 = \frac{1}{J}\sum_{j=1}^J x_0^{(j)}$. The step sizes for SEKI-f are again defined via \eqref{eq:SEKI_stepradon}, while Sub-GD uses a diminishing schedule of the form $h_k = h_0/(k+1)^p$ for all $k=0,\dots,K-1$. 
In both cases we set $h_0 = 0.9\cdot \lambda_{\max}(A^\top A)$ and $p = 0.6$, since the objective $\Phi_R$ is not strongly convex.

To assess the influence of ill-conditioning, we repeat the experiment for several values of the correlation parameter $\rho\in\{0,0.95,0.98\}$ while keeping the ground truth $x^\dagger$ fixed. 
For each method we report the reconstruction error relative to a high-accuracy reference solution $x_\ast$, obtained by running a proximal gradient method (ISTA) for a large number of iterations. 
In particular, we measure $\|x_k - x_{\ast}\|/\|x_{\ast}\|$ as well as the objective gap $\Phi_R(x_k)-\Phi_R(x_{\ast})$. 
Further implementation details are summarized in \Cref{tab:cs_params} of \Cref{app:num_cs}.

\begin{figure}[!htb]
  \centering \includegraphics[width=0.85\textwidth]{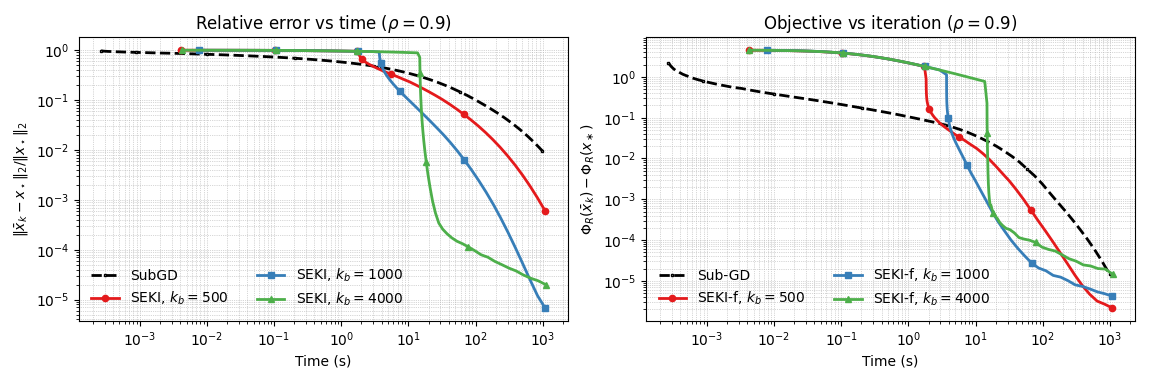}
 \caption{Relative reconstruction error (left) and objective gap (right) for the compressed sensing experiment with correlation parameter $\rho=0.95$. Results are shown as a function of computational time for Sub-GD and SEKI-f with burn-in lengths $k_b\in\{500,2000,4000\}$. } \label{fig:rho2}
\end{figure} 

\begin{figure}[!htb]
\centering \includegraphics[width=0.85\textwidth]{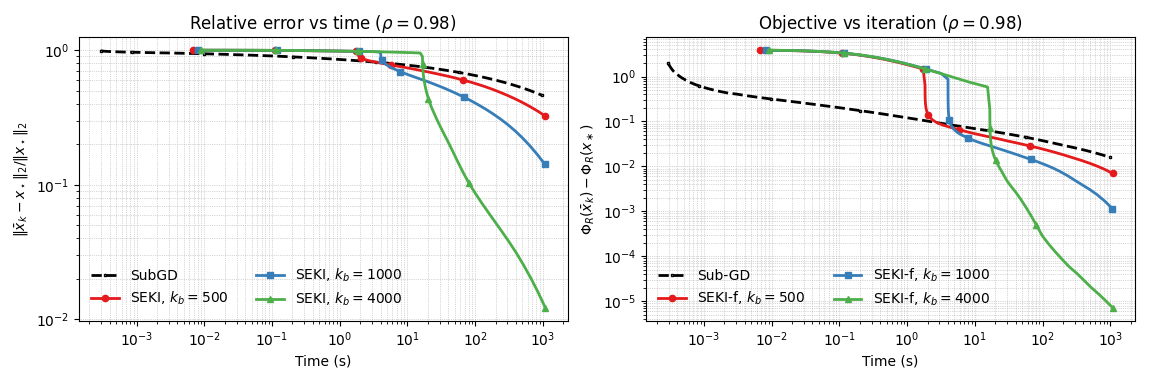}
 \caption{Relative reconstruction error (left) and objective gap (right) for the compressed sensing experiment with correlation parameter $\rho=0.98$. Results are shown as a function of computational time for Sub-GD and SEKI-f with burn-in lengths $k_b\in\{500,1000,4000\}$. } \label{fig:rho3}
\end{figure} 

\paragraph{Results.}
We consider three different choices of the correlation parameter $\rho$. 
Larger values of $\rho$ lead to a more ill-conditioned optimization problem. 
For $\rho=0$, Sub-GD significantly outperforms SEKI-f (\Cref{fig:rho1}). 
However, as the conditioning deteriorates, the covariance-based preconditioning in SEKI-f becomes increasingly advantageous, which is reflected in the results for $\rho\in\{0.95,0.98\}$ (\Cref{fig:rho2,fig:rho3}).

In these figures, we report the relative reconstruction error and the objective gap as functions of the computational time. We additionally compare different burn-in lengths $k_b\in\{500,2000,4000\}$ to illustrate the trade-off between computational cost and the quality of the learned preconditioner.

Finally, \Cref{fig:rho1_recon,fig:rho2_recon,fig:rho3_recon} of \Cref{app:num_cs} compare the true signal with the corresponding reconstructions for $\rho\in\{0,0.95,0.98\}$. 
In the right panels, the signal is plotted after sorting the entries according to the values of $x^\dagger$. As expected, the reconstruction quality deteriorates for larger values of $\rho$, which reflects the increasing difficulty of the inverse problem under the fixed regularization parameter.

\section{Conclusion}\label{sec:conclusion}
In this work, we introduced a subgradient-based formulation of EKI for variational inverse problems with convex, possibly non-smooth regularization. The proposed SEKI dynamics provide a covariance-preconditioned subgradient flow that extends the classical gradient-flow interpretation of deterministic EKI. In the linear setting we established well-posedness of the resulting particle system and proved convergence of a practical explicit discretization of the method. Numerical experiments demonstrate that non-smooth regularizers such as total variation and $\ell_1$ penalties can be incorporated into EKI within this framework.

Several directions for future work remain open. A first important step is the extension of the theoretical analysis to nonlinear forward models. Furthermore, it would be interesting to investigate discretizations based on splitting or proximal-type schemes that more directly exploit the structure of the non-smooth regularization. Finally, a deeper understanding of the covariance freezing strategy would be desirable, in particular adaptive approaches that detect when the covariance structure has stabilized while its overall magnitude continues to decay.

\bibliographystyle{plain}
\bibliography{references, references_old}

\appendix

\section{Additional information}

\subsection{Background: Solutions of differential inclusions}

We begin with the notion of a solution to a differential inclusion. This definition follows the standard notion of solutions used in \cite{aubin1984differential}. 
\begin{definition}[Solution of a differential inclusion]
Let $F : [0,T] \times \mathbb R^d \rightrightarrows \mathbb R^d$
be a set-valued map with nonempty closed values and measurable in $t$.
A function $x : [0,T] \to \mathbb R^d$ is called a \emph{solution}
of the differential inclusion
\begin{equation}\label{eq:DI}
    \frac{{\mathrm{d}} x(t)}{{\mathrm d}t} \in F(t,x(t))
\end{equation}
if the following conditions hold:
\begin{enumerate}
\item $x$ is absolutely continuous on $[0,T]$;
\item there exists a measurable function
      $w : [0,T] \to \mathbb R^d$ such that
      \begin{equation}\label{eq:selection}
          w(t) \in F(t,x(t))
          \quad \text{for almost every } t \in [0,T];
      \end{equation}
\item for almost every $t \in [0,T]$ the differential equality holds:
      \begin{equation}\label{eq:DI-ODE}
          \frac{{\mathrm{d}} x(t)}{{\mathrm d}t} = w(t)\,.
      \end{equation}
\end{enumerate}
The function $w(\cdot)$ is called a \emph{measurable selection}
of the right-hand side along the trajectory $x(\cdot)$.
\end{definition}

\subsection{Implementation details of \Cref{sec:num_radon}}\label{app:num_radon}
The following \Cref{tab:radon_params} provides all implementation details of the Radon transform experiment.

\begin{table}[!htb]
\centering
\caption{Experimental parameters for the Radon transform tomography experiment.}
\label{tab:radon_params}
\begin{tabular}{ll}
\hline
Parameter & Value \\ 
\hline
Image size & $32 \times 32$ ($d = 1024$ unknowns) \\
Forward operator & Discrete Radon transform $A$ \\
Number of projection angles & $50$ \\
Detector bins per angle & $32$ \\
Data dimension & $K = 1600$ \\
Noise model & $y = Ax^\dagger + \eta$, $\eta\sim\mathcal N(0,\sigma^2 I)$ \\
Noise level & $\sigma = 0.01$ \\
Regularizers & Tikhonov: $R_{\mathrm{Tik}}(x)=\frac{\alpha_1}{2}\|x\|^2$, $\alpha_1 = 0.01$ \\
             & Total Variation: $R_{\mathrm{TV}}(x)=\alpha_2\,\mathrm{TV}(x)$, $\alpha_2 = 0.1$ \\
Ensemble size & $J = 1200$ \\
SEKI-f step size (Phase I) & $h_k = h_0=0.9\cdot \lambda_{\max}(A^\top A + 0.01\cdot I_d)$ \\
SEKI-f step size (Phase II) & $h_k = k_b\, h_0/(k+1)$ \\
Burn-in iterations (freezing) & $k_b \in \{50,1000,2000,5000\}$ \\
Initialization & $x_0^{(j)} \sim \mathcal N(0, 0.1^2\cdot I_d)$ \\
\hline
\end{tabular}
\end{table}

\subsection{Implementation details and additional results of \Cref{sec:num_cs}}\label{app:num_cs}
The following \Cref{tab:cs_params} provides all implementation details of the compressed sensing experiment. Moreover, in \Cref{fig:rho1_recon,fig:rho2_recon,fig:rho3_recon} below the resulting reconstructions of the sparse signal are displayed for different values of $\rho$.

\begin{table}[!htb]
\centering
\caption{Experimental parameters for the compressed sensing benchmark.}
\label{tab:cs_params}
\begin{tabular}{ll}
\hline
Parameter & Value \\ 
\hline
Signal dimension & $d = 512$ \\
Number of measurements & $K = 160$ \\
Forward operator & $A = G \Sigma^{1/2}$ with $G_{ij}\sim \mathcal N(0,1/K)$ \\
Column correlation & $\Sigma_{ij} = \rho^{|i-j|}$ \\
Correlation parameter & $\rho \in \{0,0.9,0.98\}$ \\
Sparsity level & $s = 20$ nonzero coefficients in $x^\dagger$ \\
Signal amplitudes & $(x^\dagger)_i \sim \mathcal N(0,1)$ for $i\in S$ \\
Noise model & $y = Ax^\dagger + \eta$, $\eta\sim\mathcal N(0,\sigma^2 I)$ \\
Noise level & $\sigma = 0.02$ \\
Regularizer & $R(x)=\alpha\|x\|_1$, $\alpha = 0.05$ \\
Ensemble size & $J = 1500$ \\
SEKI-f step size (Phase I) & $h_k = h_0 = 0.9\cdot\lambda_{\max}(A^\top A)$ \\
SEKI-f step size (Phase II) & $h_k = k_b\,h_0 / (k+1)^{0.6}$\\
Burn-in iterations (freezing) & $k_b \in \{500,1000,4000\}$ \\
Initialization & $x_0^{(j)} \sim \mathcal N(0,0.1^2 I)$ for $j=1,\dots,J$ \\
\hline
\end{tabular}
\end{table}

\begin{figure}[!htb]
  \centering \includegraphics[width=0.8\textwidth]{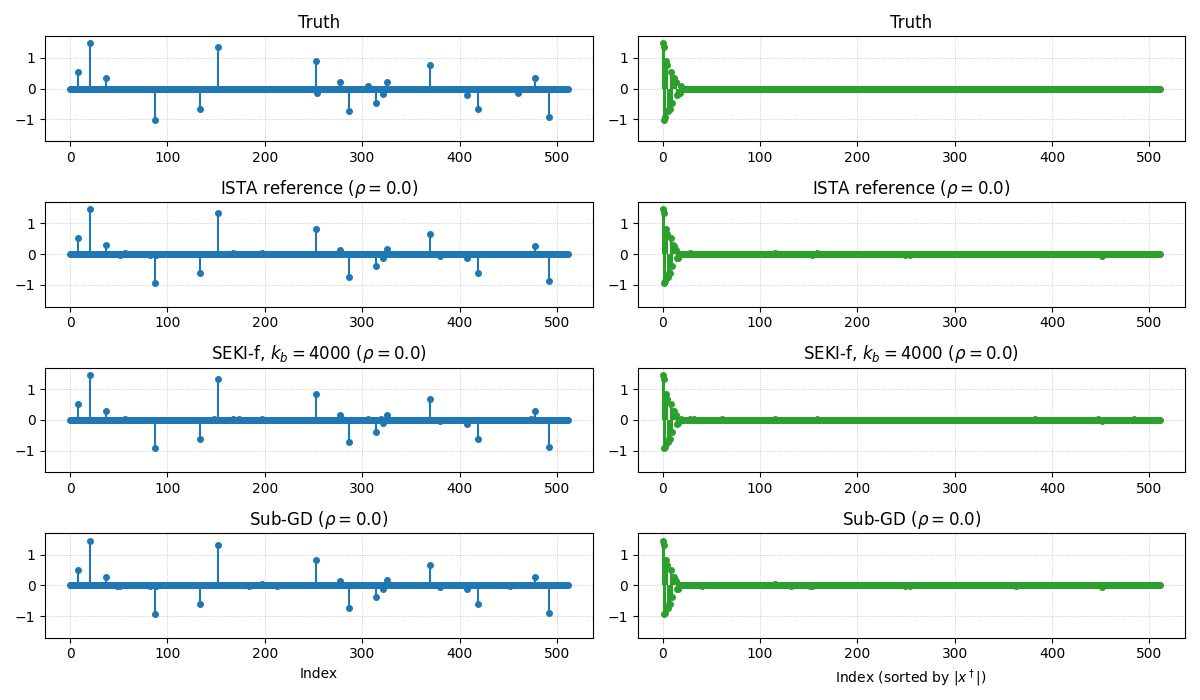}
 \caption{Recovery of the sparse signal for $\rho=0$. From top to bottom: ground truth, ISTA reference solution, SEKI-f ($k_b=4000$), and Sub-GD. Left: signal in original order. Right: entries sorted by the magnitude of $x^\dagger$. } \label{fig:rho1_recon}
\end{figure} 

\begin{figure}[!htb]
  \centering \includegraphics[width=0.8\textwidth]{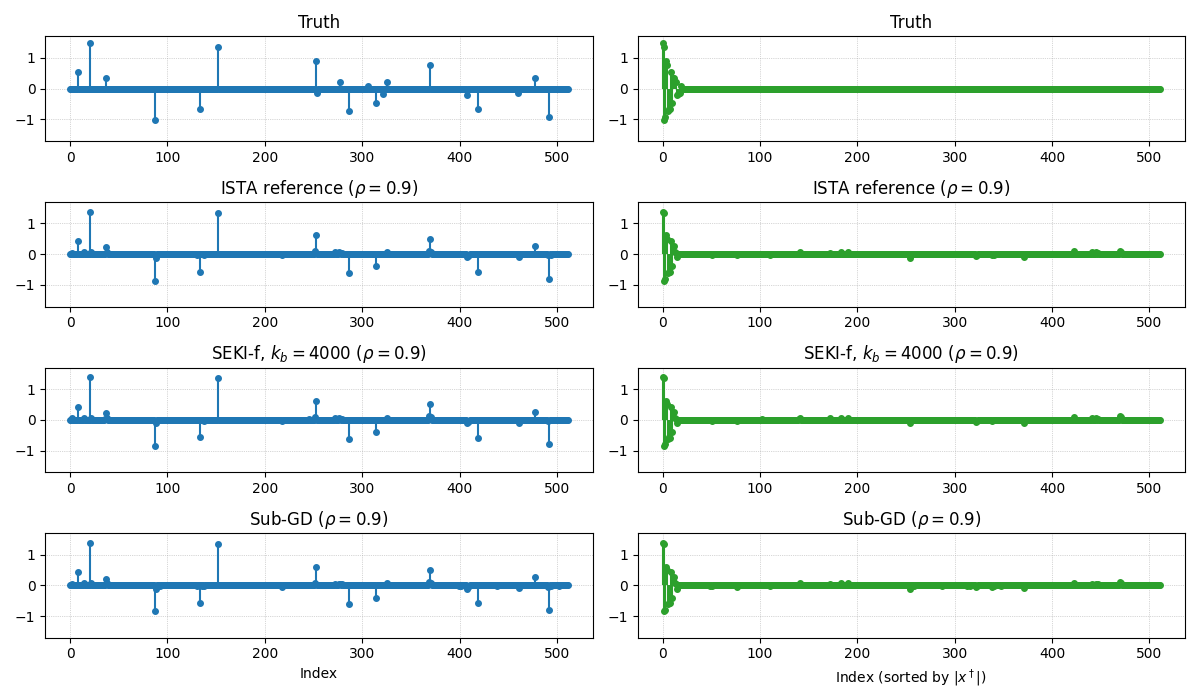}
 \caption{Recovery of the sparse signal for $\rho=0.95$. From top to bottom: ground truth, ISTA reference solution, SEKI-f ($k_b=4000$), and Sub-GD. Left: signal in original order. Right: entries sorted by the magnitude of $x^\dagger$. } \label{fig:rho2_recon}
\end{figure} 

\begin{figure}[!htb]
  \centering \includegraphics[width=0.8\textwidth]{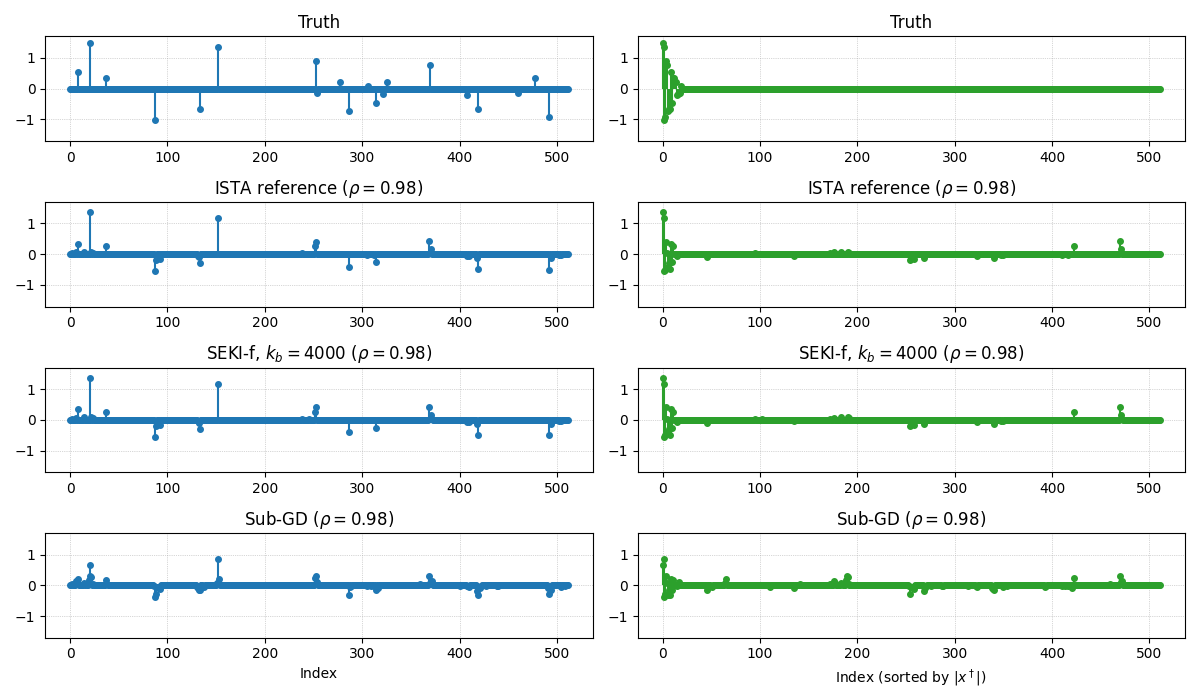}
 \caption{Recovery of the sparse signal for $\rho=0.98$. From top to bottom: ground truth, ISTA reference solution, SEKI-f ($k_b=4000$), and Sub-GD. Left: signal in original order. Right: entries sorted by the magnitude of $x^\dagger$.} \label{fig:rho3_recon}
\end{figure}

\end{document}